\newcommand{\R}{\mathbb{R}}
\newcommand{\N}{\mathbb{N}}
\begin{document}
	
	\title{Generalized substantial fractional operators and well-posedness of Cauchy problem 
	}
	
	
	\author{Hafiz Muhammad Fahad         \and
		Mujeeb ur Rehman 
	}
	
	
	\institute{Hafiz Muhammad Fahad \at
		Department of Mathematics, School of Natural Sciences,\\
		National University of Sciences and Technology,
		Islamabad Pakistan \\
		Tel.: +92-333-6330824\\
		\email{hafizmuhammadfahad13@gmail.com}           
		\and
		Mujeeb ur Rehman \at
		Department of Mathematics, School of Natural Sciences,\\
		National University of Sciences and Technology,
		Islamabad Pakistan \\
		Tel.: +92-51-90855588\\
		\email{mujeeburrrehman345@yahoo.com} \\
	}
	
	\date{Received: date / Accepted: date}

	\maketitle
\begin{abstract}
In this work we focus on substantial fractional integral and differential operators which play an important role in modeling anomalous diffusion.
We  introduce a new generalized  substantial  fractional integral. Generalizations of fractional
substantial derivatives  are also introduced both in Riemann-Liouville and Caputo sense.
Furthermore, we analyze  fundamental properties of these operators.
Finally, we consider a class of generalized substantial fractional differential equations and discuss the existence, uniqueness and continuous dependence of solutions on initial data.
\keywords{ Generalized fractional derivative  \and Fractional integrals \and Caputo derivatives \and  Riemann-Liouville derivative  \and Gronwall inequality \and  Well-posedness }
 \noindent \textbf{Mathematics Subject Classification} 26A33  $\cdot$    \and 
 34A08
\end{abstract}

\section{Introduction}

Fractional calculus originated on September 30, 1695 when Leibniz expressed his idea of derivative in a note to De l’Hospital. De l’Hospital asked about the meaning of $ \frac{d^{n}f(x)}{dx^{n}} $ when $ n=\frac{1}{2} $. But by now the field of fractional calculus has been revolutionized. Nowadays this field has become very popular amongst the scientists and a great number of different forms of fractional operators have been introduced by notable researchers \cite{Ross and Samko,Podlubny,Hilfer,Kilbas2,Kilbas,R. Khalil,Abdeljawad}.

When it comes to practical applications, the substantial derivatives, introduced by  R. Friedrich et al. \cite{R. Friedrich}, have a wide range of utilization. For example, R. Friedrich, F. Jenko,  A. Baule, and S. Eule found that a fractional substantial derivative which represents important non-local couplings in space and time, is involved in generalized Fokker-Planck collision operator. By taking a modified shifted substantial Grunwald formula, Zhaopeng Hao, Wanrong Cao and Guang Lin \cite{Z. Hao} found a second-order approximation of fractional substantial derivative. Minghua Chen and Weihua Deng \cite{Chen} presented the numerical discretizations and some properties of the fractional substantial operators. Shai Carmi, Lior Turgeman and Eli Barkai \cite{Lior} used the CTRW model \cite{H. Scher} and derived forward as well as backward fractional Feynman--Kac equation by replacing the ordinary temporal derivative with substantial derivative.

The selection of a suitable fractional operator depends on the physical system under consideration. As a result, we observe numerous definitions of different fractional operators in literature. So, it is logical to establish and study the generalized fractional operators, for which the existing ones are particular cases. Katugampola recently introduced a fractional operator which generalizes the Hadamard and Riemann-Liouville fractional operators \cite{Katugampola}. In \cite{Katugampola16} he presented a more generalized fractional integral operator such that the famous fractional operators, Erdelyi-Kober, Liouville, Katugampola, Hadamard, Riemann-Liouville, and Weyl become special cases of it.  Agrawal \cite{Agrawal} presented some new operators which unified Riesz–Riemann–Liouville, the left and the right fractional Riemann–Liouville, Riesz–Caputo and Caputo derivatives and the fractional Riemann–Liouville integrals. These operators further investigated by Lupa and Klimek et al.\cite{Lupa and Klimek} and Odzijewicz et al. \cite{Odzijewicz2b}. 

In this paper, we introduce the generalized substantial fractional operators both in Riemann-Liouville and Caputo sense, and obtain the relations between the generalized substantial fractional integral and some other famous fractional integrals, namely, Riemann-Liouville type Katugampola, standard Riemann-Liouville, standard substantial and Hadamard fractional integrals. We establish the relations between the generalized substantial and Riemann-Liouville type Katugampola fractional operators. Proofs of the composition rules for the newly defined generalized operators are also the part of this work. Finally, we prove the well-posedness results for a class of generalized substantial fractional differential equations.


The paper is organized as follows. In Section 2, we state definitions and some important properties of substantial and Katugampola fractional operators. In Section 3, the generalized substantial fractional operators are introduced and fundamental properties of these operators are analyzed. Section 4 and 5 are devoted to well-posedness results for a class of generalized substantial fractional differential equations.

\section{Preliminaries}
Prior to introducing fractional differential operators, we first give some notations for sake of convenience in further developments.

The notation ${}_\sigma D^m:=\left(\frac{d}{dt}+\sigma\right)^m$ where $\left(\frac{d}{dt}+\sigma\right)^m=\left(D+\sigma\right)
\left(D+\sigma\right)\left(D+\sigma\right)\cdots
\left(D+\sigma\right)$ appears frequently in literature \cite{Chen}. Along with the operator ${}_\sigma D$, in  sequel we shall use the operator  ${}_\sigma D^{m,\rho }:=(\frac{t^{1-\rho}}{\rho}\frac{d}{dt}+\sigma)^m  $, where $\sigma\in\mathbb{R}$ and $\rho\neq0$. Also the  generalized
differential operator  defined as $\frac{t^{1-\rho}}{\rho}\frac{d^m}{dt^m}$, will be denoted by $D^{m,\rho}$.
We define  function spaces $\Omega_{\sigma,\rho}^m [a,b]:=\left \{ \psi:e^{\sigma t^{\rho}}t^{1-\rho}\psi(t) \in AC^m[a,b] \right\}$ and
$\Lambda_{\sigma,\rho}^{p}[a,b] :=\left \{ \psi:e^{\sigma t^{\rho}}t^{1-\rho}\psi \in L_{p}[a,b] \right \} $ where $AC[a,b]$ is the space of absolutely continuous functions and $L_{p}[a,b]~(1\leq p<\infty)$ denotes the space of measurable functions on $[a,b]$. For simplicity $\Omega_{\sigma,1}$ and $\Omega_{0,\rho}$  will be denoted by $\Omega_{\sigma}$ and $\Omega_{\rho}$ respectively.
\subsection{Substantial fractional operators}
\begin{definition}\label{def1}\cite{R. Friedrich,Chen}
	Let $\alpha$ and $ \sigma $ be real numbers such that $ \alpha > 0 $ and   $ \psi \in \Lambda_{\sigma}^{1}[a,b]$ then substantial  fractional integral operator is defined as
	$\tilde{I}^{\alpha}_{a}\psi(t)=\frac{1}{\Gamma(\alpha)}\int_{a}^{t} (t - s)^{\alpha-1} e^{-\sigma (t-s)} \psi(s) ds$.
	Furthermore for $\psi\in \Omega_{\sigma}^{m}[a,b]$, $m-1<\alpha\leq m$, the Riemann-Liouville type substantial fractional derivative is defined as $\tilde{D}^{\alpha}_{a}\psi(t)=\tilde{D}^{m}_{a}{}^{}_{}\tilde{I}^{m-\alpha}_{a}\psi(t).$ The Caputo type substantial fractional derivative is defined as ${}^c\tilde{D}^{\alpha}_{a}\psi(t)=\tilde{I}^{m-\alpha}_{a}\tilde{D}^{m}_{a}\psi(t).$
\end{definition}

\subsection{Katugampola fractional operators}
For $\rho\neq0$, let $I^{1,\rho}_{a}\psi(t)=\int_{a}^{t}\psi(s)d(s^\rho)$, where $d(s^\rho)=\rho s^{\rho-1}ds$. Then $m$th iterate of the integral operator $I^{1,\rho}_{a}$ is given by
\begin{equation}\label{Int1}
\begin{split}
I^{m,\rho}_{a}\psi(t)&=\int_{a}^{t}d(t_1^\rho)\int_{a}^{t_1}d(t_2^\rho)\int_{a}^{t_2}d(t_3^\rho)
\cdots \int_{a}^{t_{n-1}}\psi(t_{n-1})d(t_{n-1}^\rho)\\
&=\frac{\rho}{\Gamma(m)}\int_{a}^{t}(t^\rho-s^\rho)^{m-1}\psi(s)s^{\rho-1}ds.
\end{split}
\end{equation}
Replacing the $m$ by real $\alpha>0$ in \eqref{Int1}, the Katugampola fractional integral is defined as
\begin{definition} \label{KID}\cite{{Katugampola}} For $\rho\neq0$, $\alpha>0$ and $\psi\in\Lambda_{\rho}^{1}[a,b]$, the Katugampola fractional integral is given by
	\begin{equation*}
	I^{\alpha,\rho}_{a}\psi(t)=\frac{\rho}{\Gamma(\alpha)}\int_{a}^{t}(t^\rho-s^\rho)^{\alpha-1}\psi(s)s^{\rho-1}ds.
	\end{equation*}
	Furthermore, for $m-1<\alpha\leq m$ and  $\phi\in \Omega_{\rho}^{m}[a,b]$ the Riemann-Liouville type  Katugampola fractional derivative is defined as
	$D^{\alpha,\rho}_{a}\psi(t)=D^{m,\rho}I^{m-\alpha,\rho}_{a}\psi(t)$ and the Caputo-type  Katugampola is defined as
	${}^cD_{a}^{\alpha}\psi(t)=I^{m-\alpha,\rho}_{a}D^{m,\rho}\psi(t)$.
\end{definition}
\begin{remark} 
	We introduced a slight modification in the definition of Katugampola fractional integral operator. The factor $\rho^{1-\alpha}$ in original definition is now replaced with $\rho$. This avoids repeated appearance of some factors of $\rho$ in calculations \cite[p.~103]{Kilbas}.
\end{remark}
It is to be noted $ D^{1,\rho} I^{1,\rho}_{a}\psi(t)=\psi(t)$. A repeated application of this identity leads us to the identity
$ D^{m,\rho} I^{m,\rho}_{a}\psi(t)=\psi(t)$. Furthermore
$I^{1,\rho} _{a}D^{1,\rho}  \psi(t)=\int_a^tD^\rho \psi(s)d(s^\rho)=\int_a^td(\psi(s))=\psi(t)-\psi(a).$ Similarly
$I^{2,\rho} _{a}D^{2,\rho}  \psi(t)=\psi(t)-\psi(a)-(t^\rho-a^\rho)D^{1,\rho}\psi(a).$
In general, a repeated application of preceding steps leads to Taylor type expansion of $\psi$ as
\begin{equation}\label{TS1}
\psi(t)=\sum_{k=0}^{m-1}\frac{D^{k,\rho}\psi(a)}{k!}(t^\rho-a^{\rho})^k+  I_{a}^{m,\rho}  D^{m,\rho}  \psi(t).
\end{equation}
The Katugampola fractional differential and integral operators
satisfy following properties \cite{Malinowska,Oliveira}:
\begin{itemize}
	\item[(P1)] For $\psi\in\Lambda_{\rho}^{1}[a,b]$,  ${I}_a^{\alpha,\rho}  I_a^{\beta,\rho}\psi(t)
	={I}_a^{\beta,\rho} I_a^{\alpha,\rho}\psi(t)= I_a^{\alpha+\beta,\rho}\psi(t)$.
	\item[(P2)] For $\beta\geq \alpha$, and $\psi\in\Lambda_{\rho}^{1}[a,b]$,  $ D_a^{\alpha,\rho}  I_a^{\beta,\rho}\psi(t)=
	I_a^{\beta-\alpha,\rho}\psi(t)$ and  ${}^cD_a^{\alpha,\rho} I_a^{\beta,\rho}\psi(t)=I_a^{\beta-\alpha,\rho}\psi(t)$.
	\item[(P3)] For $\alpha>1$ and $m-1< \alpha\leq m$ and $\psi\in\Omega_{\rho}^{m}[a,b]$, we have 
	$$
	I_{a}^{\alpha,\rho}  D_a^{\alpha,\rho}  \psi(t)=\psi(t)- \sum_{k=1}^{m-1}  \frac{ \lim_{s\to a^+} D_{a}^{\alpha-k,\rho}\psi(s)}{\Gamma(\alpha-k+1)}(t^\rho-a^{\rho})^{\alpha-1}.$$
	Specifically, for $ 0 < \alpha < 1 $,
	$
	I_{a}^{\alpha,\rho}  D_a^{\alpha,\rho}  \psi(t)=\psi(t)-  \frac{I^{1-\alpha,\rho}_{a}\psi(a)}{\Gamma(\alpha)}(t^\rho-a^{\rho})^{\alpha-1}$.
	\item[(P4)] For $\psi\in\Omega_{\rho}^{m}[a,b]$ and $\beta\geq \alpha$,
	$$
	I_{a}^{\alpha,\rho}  {}^c D_a^{\alpha,\rho}  \psi(t)=\psi(t)- \sum_{k=0}^{m-1} \frac{D^{k,\rho}\psi(a)}{k!}(t^\rho-a^{\rho})^k.$$
\end{itemize}
\begin{lemma}\label{Lem1}  Assume that $\psi\in\Omega_{\rho}^{m}[a,b]$. Then
	$
	{}_\sigma D^{m,\rho} (e^{-\sigma t^\rho}\psi(t))=e^{-\sigma t^\rho}D^{m,\rho}\psi(t)
	$ and $  e^{\sigma t^\rho} {}_\sigma D^{m,\rho} (\psi(t))=D^{m,\rho}(e^{\sigma t^\rho}\psi(t)) $.
\end{lemma}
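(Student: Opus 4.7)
The plan is to prove both identities by induction on $m$, reducing everything to an $m=1$ product-rule calculation that exposes a clean cancellation. The conceptual content is that conjugation by $e^{\sigma t^\rho}$ intertwines $D^{1,\rho}$ with the shifted operator ${}_\sigma D^{1,\rho} = D^{1,\rho}+\sigma$; once this is established, passing to general $m$ is immediate because both ${}_\sigma D^{m,\rho}$ and $D^{m,\rho}$ are $m$-fold compositions of their $m=1$ counterparts.

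For the base case of the first identity, I write
$${}_\sigma D^{1,\rho}\bigl(e^{-\sigma t^\rho}\psi(t)\bigr) = \frac{t^{1-\rho}}{\rho}\frac{d}{dt}\bigl(e^{-\sigma t^\rho}\psi(t)\bigr) + \sigma e^{-\sigma t^\rho}\psi(t),$$
and apply the product rule together with $\frac{d}{dt}e^{-\sigma t^\rho} = -\sigma\rho\, t^{\rho-1}e^{-\sigma t^\rho}$. The resulting term $-\sigma e^{-\sigma t^\rho}\psi(t)$ cancels the additive shift $+\sigma e^{-\sigma t^\rho}\psi(t)$, leaving $e^{-\sigma t^\rho}\cdot\frac{t^{1-\rho}}{\rho}\psi'(t) = e^{-\sigma t^\rho}D^{1,\rho}\psi(t)$. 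The inductive step then uses ${}_\sigma D^{m+1,\rho} = {}_\sigma D^{1,\rho}\circ{}_\sigma D^{m,\rho}$: apply the inductive hypothesis first to strip off ${}_\sigma D^{m,\rho}$, then apply the $m=1$ case to the result $e^{-\sigma t^\rho}D^{m,\rho}\psi(t)$ (the assumption $\psi\in\Omega_\rho^m[a,b]$ ensures enough regularity for each application).

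The second identity then follows from the first without further work. Substituting $\psi(t) = e^{\sigma t^\rho}\phi(t)$ into ${}_\sigma D^{m,\rho}(e^{-\sigma t^\rho}\psi) = e^{-\sigma t^\rho}D^{m,\rho}\psi$ collapses the exponentials on the left; multiplying through by $e^{\sigma t^\rho}$ gives $e^{\sigma t^\rho}\,{}_\sigma D^{m,\rho}\phi(t) = D^{m,\rho}(e^{\sigma t^\rho}\phi(t))$, which is the claim after renaming $\phi\to\psi$. The only subtlety is keeping track of the sign in the base-case cancellation; beyond that, the argument is essentially mechanical, and I do not anticipate any serious obstacle.
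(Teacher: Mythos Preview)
Your proof is correct and follows essentially the same inductive argument as the paper: the base case $m=1$ is exactly the product-rule cancellation you describe, and the inductive step is the same composition ${}_\sigma D^{m,\rho}={}_\sigma D^{1,\rho}\circ{}_\sigma D^{m-1,\rho}$. The only minor difference is that the paper dispatches the second identity by saying it ``can be obtained in a similar way'' (i.e., a parallel induction), whereas you deduce it from the first via the substitution $\psi=e^{\sigma t^\rho}\phi$; your route is slightly more economical but the mathematical content is identical.
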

\begin{proof}
	We prove this Lemma by induction. For $m=1$, we have
	$$ {}_\sigma D^{1,\rho} (e^{-\sigma t^\rho}\psi(t))=\frac{t^{1-\rho}}{\rho}\frac{d}{dt}\left(e^{-\sigma t^\rho}\psi(t)\right)+\sigma e^{-\sigma t^\rho}\psi(t)=e^{-\sigma t^\rho}D^{1,\rho}\psi(t).$$
	Assume the conclusion follows for $m-1$. Then,
	$$
	{}_\sigma D^{m,\rho} (e^{-\sigma t^\rho}\psi(t))={}_{\sigma} D^{1,\rho}{}_{\sigma}D^{m-1,\rho}( e^{-\sigma t^\rho}\psi(t))                                  ={}_{\sigma} D^{1,\rho}\left(e^{-\sigma t^\rho}D^{m-1,\rho}\psi(t)\right)=e^{-\sigma t^\rho}D^{m,\rho}\psi(t).$$ Second identity can be obtained in a similar way.
\end{proof}
%
\section{Generalized substantial fractional integral and derivatives}
Motivated by  definitions of substantial fractional operators, here we introduce  new definitions for substantial fractional operators by generalizing   Katugampola fractional operators. We also establish relation between generalized substantial fractional operators and the Katugampola fractional operators. \\
For $\rho\neq0$ and $\sigma\in \mathbb{R}$, define ${}_\sigma I_{a}^{1,\rho}\psi(t)=\int_{a}^{t}\psi(s)e^{-\sigma(t^\rho-s^\rho)}d(s^\rho)$. Then generalized substantial integral of order $m$ is given by $m$th
iterate of the integral ${}_\sigma I_{a}^{1,\rho}$ as
\begin{equation}\label{IntSBS}
\begin{split}
{}_\sigma I_{a}^{m,\rho}\psi(t)&=\int_{a}^{t}e^{-\sigma(t^\rho-t_1^\rho)}d(t_1^\rho)
\int_{a}^{t_1}e^{-\sigma(t^\rho-t_2^\rho)}d(t_2^\rho)
\cdots \int_{a}^{t_{n-1}}e^{-\sigma(t^\rho-t_{n-1}^\rho)}\psi(t_{n-1})d(t_{n-1}^\rho)\\
&=\frac{\rho}{\Gamma(m)}\int_{a}^{t}(t^\rho-s^\rho)^{m-1}e^{-\sigma(t^\rho-s^\rho)}\psi(s)s^{\rho-1}ds.
\end{split}
\end{equation}

We observe that ${}_\sigma D^{1,\rho}{}_\sigma I^{1,\rho}_{a}\psi(t)=\psi(t)$. A repeated application of this identity leads us to the identity
${}_\sigma D^{m,\rho}{}_\sigma I^{m,\rho}_{a}\psi(t)=\psi(t)$.
Thus for $m>n$, we have  ${}_\sigma D^{m-n,\rho}{}_\sigma I^{m-n,\rho}_{a}\psi(t)=\psi(t)$. Application of the operator
${}_\sigma D^{n,\rho}$ to both sides of this identity leads to the identity ${}_\sigma D^{n,\rho}\psi(t)={}_\sigma D^{m,\rho}{}_\sigma I^{m-n,\rho}_{a}\psi(t).$ This relation will lead us to the definition of generalized fractional derivative.
Furthermore
${}_\sigma I^{1,\rho}_{a}{}_\sigma D^{1,\rho} \psi(t)
=\int_a^t (\frac{s^{1-\rho}}{\rho}\frac{d}{ds}+\sigma)\psi(s)d(s^\rho)
=\psi(t)-\psi(a)e^{-\sigma(t^\rho-a^\rho)}.$ In general a repeated application of this process leads us to   generalized Taylor expansion involving generalized operators
\begin{equation}\label{TS2}
{}_\sigma I^{m,\rho}_{a}{}_\sigma D^{m,\rho}\psi(t)=\psi(t)-  e^{-\sigma (t^\rho - a^\rho)} \sum_{k=1}^{m} 
\frac{(t^\rho-a^{\rho})^{m-k} }{\Gamma(m-k+1)}\underset{s\to a^+}{\lim} {}_\sigma D^{m-k,\rho} \psi(s)\nonumber
\end{equation}
provided $\psi\in \Omega_{\rho}^{m}[a,b]$.
\begin{definition}\label{GSID}
	For real numbers $ \sigma $, $ \rho\neq 0 $, $\alpha>0$ and   $ \psi \in \Lambda^1_{\sigma,\rho}[a,b]$,  we define generalized substantial integral as
	$${}_\sigma I_a^{\alpha,\rho }\psi(t)=\frac{\rho}{\Gamma(\alpha)}\int_a^t\frac{s^{\rho-1}
		e^{-\sigma(t^\rho-s^\rho)}}{(t^\rho-s^\rho)^{1-\alpha}}\psi(s) ds. $$
	Furthermore, the Riemann-Liouville type generalized substantial fractional derivative is defined as
	$
	{}_\sigma D_a^{\alpha,\rho }\psi(t)={}_\sigma D^{m,\rho } {}_\sigma I_a^{m-\alpha, \rho}\psi(t) $
	where $m-1<\alpha\leq m$.
\end{definition}

%
It is to be noted that for $\sigma\to0$, ${}_\sigma I_a^{\alpha,\rho }\psi(t)\to I_a^{\alpha,\rho }\psi(t)$, which is Katugampola fractional integral. Furthermore, for $\sigma=0$ and $\rho\to 1$, the generalized  substantial integral approaches to standard Riemann-Liouville integral and the lower limit $a\to-\infty$ leads to the Weyl fractional integral. For $\sigma\neq 0$ and $\rho=1$ the generalized  substantial integral becomes the standard substantial integral. Finally for $\sigma=0$ and $\rho\to 0$, we get the Hadamard fractional integral.

\begin{definition}\label{def5}
	For  $m-1<\alpha\leq m$,  $a<b<\infty$ and $ \psi\in \Omega_{\sigma,\rho}^m[a,b]$.
	Then generalized Caputo type substantial  derivative is defined  as
	\begin{align*}
	{}_\sigma^c{D}_a^{\alpha,\rho}\psi(t)={}_\sigma{D}_a^{\alpha,\rho}\Big(\psi(t)-\sum_{k=0}^{m-1}\frac{{}_\sigma{D}^{k,\rho}\psi(a)}{k!}(t^\rho-a^\rho)^k e^{-\sigma(t^\rho-a^\rho)}\Big).
	\end{align*}
\end{definition}\label{GSCaputo}

\begin{theorem}\label{theorem4} Assume $ \alpha>0 $, $ \sigma>0 $, $\rho > 0$ and $ \{\psi_k\}_\text{k=$1$}^\infty $ is a uniformly convergent sequence of continuous functions on $[0,b]$. Then
	$
	(  {}_\sigma I_0^{\alpha,\rho}\lim_{k\to\infty} \psi_k)(t)=(\lim_{k\to\infty}\   {}_\sigma I_0^{\alpha,\rho} \psi_k)(t).
	$
\end{theorem}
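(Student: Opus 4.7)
The plan is to reduce the statement to a direct estimate on the integral defining ${}_\sigma I_0^{\alpha,\rho}$. Write $\psi=\lim_{k\to\infty}\psi_k$; since the convergence is uniform and every $\psi_k$ is continuous on $[0,b]$, the limit $\psi$ is continuous on $[0,b]$, so ${}_\sigma I_0^{\alpha,\rho}\psi(t)$ is well-defined by Definition \ref{GSID}. Set $\varepsilon_k:=\sup_{s\in[0,b]}|\psi_k(s)-\psi(s)|$, which tends to $0$ as $k\to\infty$.

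The second step is a pointwise estimate. For fixed $t\in[0,b]$, linearity of the integral gives
$$\bigl|{}_\sigma I_0^{\alpha,\rho}\psi_k(t)-{}_\sigma I_0^{\alpha,\rho}\psi(t)\bigr|\le \frac{\rho}{\Gamma(\alpha)}\int_0^t\frac{s^{\rho-1}e^{-\sigma(t^\rho-s^\rho)}}{(t^\rho-s^\rho)^{1-\alpha}}\,|\psi_k(s)-\psi(s)|\,ds\le \frac{\rho\,\varepsilon_k}{\Gamma(\alpha)}\int_0^t\frac{s^{\rho-1}}{(t^\rho-s^\rho)^{1-\alpha}}ds,$$
where in the last step I use $\sigma>0$ and $s\le t$ to bound $e^{-\sigma(t^\rho-s^\rho)}\le 1$. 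The substitution $u=t^\rho-s^\rho$ (so $du=-\rho s^{\rho-1}ds$) evaluates the remaining integral explicitly to $t^{\rho\alpha}/(\rho\alpha)$, yielding
$$\bigl|{}_\sigma I_0^{\alpha,\rho}\psi_k(t)-{}_\sigma I_0^{\alpha,\rho}\psi(t)\bigr|\le \frac{\varepsilon_k\,t^{\rho\alpha}}{\Gamma(\alpha+1)}\le \frac{\varepsilon_k\,b^{\rho\alpha}}{\Gamma(\alpha+1)}.$$

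Since the right side is independent of $t$ and tends to zero as $k\to\infty$, the sequence ${}_\sigma I_0^{\alpha,\rho}\psi_k$ converges to ${}_\sigma I_0^{\alpha,\rho}\psi$ uniformly on $[0,b]$, which is stronger than the pointwise identity claimed. There is no real obstacle here: the only point requiring a moment of care is the integrability of the kernel near $s=t$ when $0<\alpha<1$, but this is handled by the explicit evaluation above, which shows the singularity is absorbed into the factor $t^{\rho\alpha}/(\rho\alpha)$. The hypothesis $\sigma>0$ is used only to discard the decaying exponential cleanly; if $\sigma\le 0$ one would instead bound $e^{-\sigma(t^\rho-s^\rho)}\le e^{|\sigma|b^\rho}$, so the conclusion in fact holds under weaker sign assumptions too.
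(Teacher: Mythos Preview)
Your proof is correct and follows essentially the same route as the paper: bound the exponential by $1$, pull out $\lVert\psi_k-\psi\rVert_\infty$, and evaluate the remaining kernel integral explicitly to get a constant times $\lVert\psi_k-\psi\rVert_\infty$. Your final bound $b^{\rho\alpha}/\Gamma(\alpha+1)$ is in fact sharper than the paper's stated $b^{\rho}/\Gamma(\alpha+1)$, which appears to be a typographical slip there.
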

\begin{proof}
	We denote the limit of sequence $ \{\psi_k\}_\text{k=$1$}^\infty $  by $ \psi $. It is well-known that $ \psi $ is continuous. We then have following estimates
	\begin{equation}
	\begin{split}
	|{}_\sigma I_0^{\alpha,\rho}\psi_k(t)- {}_\sigma I_0^{\alpha,\rho}\psi(t)|
	&\leq\frac{\rho}{\Gamma(\alpha)}\int_{0}^{t}s^{\rho-1} (t^\rho - s^\rho)^{\alpha-1} | ( e^{-\sigma (t^\rho-s^\rho)}  ) (\psi_k(s)-\psi(s))|  ds
	\\&\leq  \frac{\rho }{\Gamma(\alpha)}\lVert \psi_k - \psi \rVert_{\infty}  \int_{0}^{t} s^{\rho-1} (t^\rho - s^\rho)^{\alpha-1}ds
	\\&=     \frac{{b^\rho}}{\Gamma(\alpha+1)}\lVert \psi_k - \psi \rVert_{\infty}.
	\nonumber
	\end{split}
	\end{equation}
	The conclusion follows, since $\lVert \psi_k - \psi \rVert_{\infty}\to 0$  as $ {k\to\infty} $ uniformly  on $[0,b] $.
\end{proof}
In the forthcoming results, we shall demonstrate the relationship between Riemann-Liouville type Katugampola fractional operators and the generalized substantial fractional operators.
\begin{lemma}\label{SFI} Assumptions $\psi\in\Lambda_{\sigma,\rho}^1[a,b]$. Then
	${}_{\sigma}I_a^{\alpha,\rho}\psi(t)=e^{-\sigma t^\rho}I_a^{\alpha,\rho}(e^{\sigma t^\rho}\psi(t)).$
\end{lemma}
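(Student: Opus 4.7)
The plan is to carry out a direct computation starting from the right-hand side, using the explicit integral representations of $I_a^{\alpha,\rho}$ from Definition \ref{KID} and of ${}_\sigma I_a^{\alpha,\rho}$ from Definition \ref{GSID}. Since the factor $e^{-\sigma t^\rho}$ depends on $t$ but is independent of the variable of integration $s$, I would pull it inside the integral defining $I_a^{\alpha,\rho}(e^{\sigma s^\rho}\psi(s))(t)$ and then combine the two exponential factors using $e^{-\sigma t^\rho}e^{\sigma s^\rho}=e^{-\sigma(t^\rho-s^\rho)}$. The resulting integrand is exactly the one appearing in ${}_\sigma I_a^{\alpha,\rho}\psi(t)$, which closes the argument.

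Before collapsing the two sides, I would briefly check that the objects on the right-hand side are well-defined under the stated hypothesis $\psi\in\Lambda_{\sigma,\rho}^{1}[a,b]$. By definition this says $e^{\sigma t^\rho}t^{1-\rho}\psi(t)\in L_1[a,b]$, i.e.\ $t^{1-\rho}\bigl(e^{\sigma t^\rho}\psi(t)\bigr)\in L_1[a,b]$, which is precisely the condition $e^{\sigma t^\rho}\psi\in\Lambda_{\rho}^{1}[a,b]$ needed to apply the Katugampola integral $I_a^{\alpha,\rho}$. So the composition on the right-hand side makes sense, and the single-line manipulation above is valid.

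There is no real obstacle here; the statement is essentially a conjugation identity, and the proof amounts to writing the two integrals side by side and matching kernels. The main thing to be careful about is the position of the exponential factors (in particular that $e^{-\sigma t^\rho}$ is a scalar with respect to the integration in $s$) and the verification of the function-space hypothesis so that the right-hand side is not formal. Once those are noted, the proof is one displayed equation chain.
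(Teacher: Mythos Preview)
Your approach is correct: writing out the Katugampola integral of $e^{\sigma s^\rho}\psi(s)$, pulling the $t$-dependent factor $e^{-\sigma t^\rho}$ inside, and combining exponentials via $e^{-\sigma t^\rho}e^{\sigma s^\rho}=e^{-\sigma(t^\rho-s^\rho)}$ immediately yields the kernel in Definition~\ref{GSID}. The paper itself does not supply a proof of this lemma (it is stated without argument and then invoked in the proofs of Theorems~\ref{Th5} and~\ref{theorem3}), so your one-line computation, together with the check that $\psi\in\Lambda_{\sigma,\rho}^1[a,b]$ is equivalent to $e^{\sigma t^\rho}\psi\in\Lambda_{\rho}^1[a,b]$, is exactly what is needed to fill the gap.
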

\begin{theorem}\label{Th5}
	Assumptions $\psi\in \Omega_{\sigma,\rho}^m[a,b]$. Then
	$
	_{\sigma}D_{a}^{\alpha,\rho}\psi(t)=e^{-\sigma t^\rho}D_{a}^{\alpha,\rho}(e^{\sigma t^\rho}\psi(t)).
	$
\end{theorem}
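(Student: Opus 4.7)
The plan is to chain together Lemma \ref{SFI} and Lemma \ref{Lem1}, since the right-hand side is precisely what is obtained by peeling the exponential factor through both the integral and the derivative pieces of the Riemann--Liouville type generalized substantial derivative.

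First I would unfold the definition: writing $m-1<\alpha\leq m$, we have
\begin{equation*}
{}_\sigma D_a^{\alpha,\rho}\psi(t) = {}_\sigma D^{m,\rho}\,{}_\sigma I_a^{m-\alpha,\rho}\psi(t).
\end{equation*}
Next I would apply Lemma \ref{SFI} to the inner generalized substantial integral (with the order $m-\alpha\geq 0$ in place of $\alpha$), which gives
\begin{equation*}
{}_\sigma I_a^{m-\alpha,\rho}\psi(t) = e^{-\sigma t^\rho}\,I_a^{m-\alpha,\rho}\bigl(e^{\sigma t^\rho}\psi(t)\bigr).
\end{equation*}
Substituting, the task reduces to evaluating ${}_\sigma D^{m,\rho}\bigl(e^{-\sigma t^\rho}\phi(t)\bigr)$ where $\phi(t):=I_a^{m-\alpha,\rho}(e^{\sigma t^\rho}\psi(t))$.

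At this point I would invoke the first identity of Lemma \ref{Lem1} to pull the exponential out of the integer-order operator:
\begin{equation*}
{}_\sigma D^{m,\rho}\bigl(e^{-\sigma t^\rho}\phi(t)\bigr) = e^{-\sigma t^\rho}\,D^{m,\rho}\phi(t).
\end{equation*}
Recognizing that $D^{m,\rho} I_a^{m-\alpha,\rho} = D_a^{\alpha,\rho}$ by Definition \ref{KID}, the conclusion ${}_\sigma D_a^{\alpha,\rho}\psi(t) = e^{-\sigma t^\rho} D_a^{\alpha,\rho}(e^{\sigma t^\rho}\psi(t))$ follows immediately.

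There is no real obstacle here; the proof is essentially a bookkeeping exercise. The only technical point worth checking is that the regularity assumption $\psi\in\Omega_{\sigma,\rho}^m[a,b]$ is exactly what is needed to justify both steps: by definition of this space, $e^{\sigma t^\rho}t^{1-\rho}\psi(t)\in AC^m[a,b]$, so $e^{\sigma t^\rho}\psi(t)$ lies in the domain $\Omega_\rho^m[a,b]$ where the Katugampola derivative $D_a^{\alpha,\rho}$ is defined, and simultaneously $\psi\in\Lambda_{\sigma,\rho}^1[a,b]$ so that Lemma \ref{SFI} applies. With these hypotheses verified, the chain of substitutions above is rigorous.
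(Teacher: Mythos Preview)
Your proof is correct and follows essentially the same route as the paper: unfold Definition~\ref{GSID}, apply Lemma~\ref{SFI} to rewrite the inner integral, then apply Lemma~\ref{Lem1} to pull the exponential through ${}_\sigma D^{m,\rho}$, and finally recognize $D^{m,\rho}I_a^{m-\alpha,\rho}$ as $D_a^{\alpha,\rho}$ via Definition~\ref{KID}. Your added remark on why the hypothesis $\psi\in\Omega_{\sigma,\rho}^m[a,b]$ suffices for both lemmas is a useful clarification that the paper leaves implicit.
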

\begin{proof}
	By definition \eqref{GSID}   of substantial fractional differential operator,  Lemma \ref{Lem1}, Lemma \ref{SFI} and definition \ref{KID} we have
	\begin{align*}
	_{\sigma}D_{a}^{\alpha,\rho}\psi(t)&={}_{\sigma}D^{m,\rho} {}_{\sigma}I_a^{m-\alpha,\rho}\psi(t)
	={}_{\sigma}D^{m,\rho}\left(e^{-\sigma t^\rho}I_a^{m-\alpha,\rho}(e^{\sigma t^\rho}\psi(t))\right)\\
	&=e^{-\sigma t^\rho}D^{m,\rho}I_a^{m-\alpha,\rho}(e^{\sigma t^\rho}\psi(t))
	=e^{-\sigma t^\rho}D_a^{\alpha,\rho}(e^{\sigma t^\rho}\psi(t)).
	\end{align*}
\end{proof}
Now we introduce composition properties of the generalized substantial operators. First we show that generalized  integral satisfies the semi-group property.
\begin{theorem}\label{theorem3} Let $  \alpha , \beta > 0 $ and  $\psi\in\Lambda_{\sigma,\rho}^1[a,b]$. Then
	$
	{}_\sigma I_a^{\alpha,\rho}{}_\sigma I_a^{\beta,\rho}\psi(t)={}_\sigma I_a^{\alpha+\beta,\rho}\psi(t). \nonumber
	$
\end{theorem}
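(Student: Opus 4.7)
The plan is to reduce everything to the already-known semigroup property (P1) for the Katugampola integral by exploiting the conjugation-by-exponential identity of Lemma \ref{SFI}. The defining formula ${}_\sigma I_a^{\alpha,\rho}\psi(t)=e^{-\sigma t^\rho}I_a^{\alpha,\rho}(e^{\sigma t^\rho}\psi(t))$ shows that the generalized substantial integral is nothing but the Katugampola integral conjugated by the multiplication operator $\psi\mapsto e^{\sigma t^\rho}\psi$; and a conjugated semigroup is automatically a semigroup.

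Concretely, I would first apply Lemma \ref{SFI} to the outer operator to obtain
\[
{}_\sigma I_a^{\alpha,\rho}\bigl({}_\sigma I_a^{\beta,\rho}\psi\bigr)(t)=e^{-\sigma t^\rho}I_a^{\alpha,\rho}\bigl(e^{\sigma t^\rho}\,{}_\sigma I_a^{\beta,\rho}\psi(t)\bigr),
\]
then apply Lemma \ref{SFI} again to the inner operator, so that the factor $e^{\sigma t^\rho}$ kills the $e^{-\sigma t^\rho}$ produced by that second application. This leaves
\[
{}_\sigma I_a^{\alpha,\rho}\bigl({}_\sigma I_a^{\beta,\rho}\psi\bigr)(t)=e^{-\sigma t^\rho}I_a^{\alpha,\rho}\bigl(I_a^{\beta,\rho}(e^{\sigma t^\rho}\psi(t))\bigr).
\]
Invoking property (P1) for the Katugampola integral collapses the composition into $I_a^{\alpha+\beta,\rho}(e^{\sigma t^\rho}\psi(t))$, and one final application of Lemma \ref{SFI} in reverse produces ${}_\sigma I_a^{\alpha+\beta,\rho}\psi(t)$, as required.

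There is essentially no hard step here, only a bookkeeping check: one must verify that $e^{\sigma t^\rho}\psi(t)$ lies in $\Lambda_\rho^1[a,b]$ so that (P1) is applicable, which is immediate from the hypothesis $\psi\in \Lambda_{\sigma,\rho}^1[a,b]$ and the definition of that space. If instead one wished to prove the statement directly, the obstacle would be evaluating the iterated integral
\[
\frac{\rho^2}{\Gamma(\alpha)\Gamma(\beta)}\int_a^t\int_a^{\tau}(t^\rho-\tau^\rho)^{\alpha-1}(\tau^\rho-s^\rho)^{\beta-1}e^{-\sigma(t^\rho-s^\rho)}\psi(s)\,s^{\rho-1}\tau^{\rho-1}\,ds\,d\tau;
\]
this can be handled by Fubini's theorem and the substitution $u=(\tau^\rho-s^\rho)/(t^\rho-s^\rho)$, reducing the inner integral to the beta function $B(\alpha,\beta)$, but the reduction via Lemma \ref{SFI} is far cleaner and is the route I would take.
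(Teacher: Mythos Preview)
Your proposal is correct and matches the paper's own proof essentially step for step: the paper likewise applies Lemma~\ref{SFI} twice to strip off the exponential factors, invokes property (P1) for the Katugampola integral, and reads the result back as ${}_\sigma I_a^{\alpha+\beta,\rho}\psi(t)$. The only cosmetic difference is the order in which the two applications of Lemma~\ref{SFI} are written out, and your remark about checking $e^{\sigma t^\rho}\psi\in\Lambda_\rho^1[a,b]$ is a small clarification the paper leaves implicit.
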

\begin{proof} Using $(P1)$ and  Lemma \ref{SFI} repeatedly we have
	\begin{equation}
	\begin{split}
	{}_\sigma I_a^{\alpha,\rho }({}_\sigma I_a^{\beta,\rho }\psi(t))&=  {}_\sigma I_a^{\alpha,\rho } ( e^{-\sigma t^\rho}      I_a^{\beta,\rho } (e^{\sigma t^\rho}\psi(t) ) )
	=e^{-\sigma t^\rho}  I_a^{\alpha+\beta,\rho } (e^{\sigma t^\rho}\psi(t))={}_\sigma I_a^{\alpha+\beta,\rho}\psi(t).  \nonumber
	\end{split}
	\end{equation}
\end{proof}
\begin{theorem}\label{theorem2} Let $m-1<\alpha\leq m$, $\beta\geq \alpha$ and  $\psi\in\Lambda_{\sigma,\rho}^1[a,b]$. Then
	$
	{}_\sigma D_a^{\alpha,\rho } {}_\sigma I_a^{\beta,\rho}\psi(t)={}_\sigma I_a^{\beta-\alpha,\rho}\psi(t).
	$
\end{theorem}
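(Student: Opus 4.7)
The plan is to reduce everything to the analogous Katugampola identity (P2) by conjugating with the exponential factor $e^{\sigma t^\rho}$, exactly as was done in the proof of Theorem \ref{Th5}. The two key tools are Lemma \ref{SFI}, which lets us write ${}_\sigma I_a^{\gamma,\rho}\phi(t)=e^{-\sigma t^\rho}I_a^{\gamma,\rho}(e^{\sigma t^\rho}\phi(t))$, and Theorem \ref{Th5}, which gives the corresponding conjugation identity for ${}_\sigma D_a^{\alpha,\rho}$.

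The steps, in order, are as follows. First I would apply Lemma \ref{SFI} to ${}_\sigma I_a^{\beta,\rho}\psi(t)$ to replace it by $e^{-\sigma t^\rho}I_a^{\beta,\rho}(e^{\sigma t^\rho}\psi(t))$. Next I would apply ${}_\sigma D_a^{\alpha,\rho}$ to this expression; writing $\Phi(t):=I_a^{\beta,\rho}(e^{\sigma t^\rho}\psi(t))$, Theorem \ref{Th5} yields
\[
{}_\sigma D_a^{\alpha,\rho}\bigl(e^{-\sigma t^\rho}\Phi(t)\bigr)=e^{-\sigma t^\rho}D_a^{\alpha,\rho}\bigl(e^{\sigma t^\rho}\cdot e^{-\sigma t^\rho}\Phi(t)\bigr)=e^{-\sigma t^\rho}D_a^{\alpha,\rho}\Phi(t).
\]
Then property (P2) gives $D_a^{\alpha,\rho}I_a^{\beta,\rho}(e^{\sigma t^\rho}\psi(t))=I_a^{\beta-\alpha,\rho}(e^{\sigma t^\rho}\psi(t))$. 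Finally, a second application of Lemma \ref{SFI} (now in reverse) converts $e^{-\sigma t^\rho}I_a^{\beta-\alpha,\rho}(e^{\sigma t^\rho}\psi(t))$ back into ${}_\sigma I_a^{\beta-\alpha,\rho}\psi(t)$, which is the claimed identity.

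The only point that needs a moment's care, and is the main (very mild) obstacle, is checking that the auxiliary function $e^{\sigma t^\rho}\psi(t)$ lies in the function space required for (P2) and for Theorem \ref{Th5} to apply. Since $\psi\in\Lambda_{\sigma,\rho}^1[a,b]$ means $e^{\sigma t^\rho}t^{1-\rho}\psi(t)\in L_1[a,b]$, the product $e^{\sigma t^\rho}\psi(t)$ indeed belongs to $\Lambda_\rho^1[a,b]$, so the Katugampola integral $I_a^{\beta,\rho}(e^{\sigma t^\rho}\psi(t))$ is well defined and property (P2) is applicable. Once this compatibility is noted, the computation above is a straightforward three-line chain of equalities, and no new estimates or limit arguments are needed.
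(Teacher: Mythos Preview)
Your proposal is correct and follows essentially the same approach the paper intends: the paper states that the proof of Theorem~\ref{theorem2} is identical to that of Theorem~\ref{theorem3}, namely conjugating by $e^{\sigma t^\rho}$ via Lemma~\ref{SFI} (together with Theorem~\ref{Th5} for the derivative side) and then invoking the corresponding Katugampola identity (P2). Your extra remark verifying that $e^{\sigma t^\rho}\psi\in\Lambda_\rho^1[a,b]$ is a welcome detail the paper leaves implicit.
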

The proof of Theorem \ref{theorem2} is same as the proof of the Theorem \ref{theorem3}. Therefore we omit it.
\begin{theorem}\label{theorem2B} Assume $ \alpha>0 $, $m-1<\alpha\leq m$ and  $ {}_\sigma I_a^{m-\alpha,\rho}\psi\in \Omega_{\sigma,\rho}^m[a,b]$.
	Then $$
	{}_\sigma I_a^{\alpha,\rho}{}_\sigma D_a^{\alpha,\rho } \psi(t)=\psi(t)-  e^{-\sigma (t^\rho - a^\rho)} \sum_{k=1}^{m} 
	\frac{(t^\rho-a^{\rho})^{\alpha-k} }{\Gamma(\alpha-k+1)}\underset{s\to a^+}{\lim} {}_\sigma D_{a}^{\alpha-k,\rho} \psi(s)\nonumber
	.$$
	
	Specifically, for $ 0<\alpha<1 $ we have
	$$
	{}_\sigma I_a^{\alpha,\rho}{}_\sigma D_a^{\alpha,\rho } \psi(t)=\psi(t)-  e^{-\sigma (t^\rho - a^\rho)} 
	\frac{(t^\rho-a^{\rho})^{\alpha-1} }{\Gamma(\alpha)}\underset{s\to a^+}{\lim} {}_\sigma I_{a}^{1-\alpha,\rho} \psi(s). \nonumber$$
\end{theorem}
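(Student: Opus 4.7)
The plan is to reduce everything to the already-established Katugampola identity (P3) by conjugating with the exponential weight $e^{\sigma t^\rho}$, exactly as in the proofs of Theorem \ref{theorem3} and Theorem \ref{theorem2}. More precisely, I would first write
$$
{}_\sigma I_a^{\alpha,\rho}\,{}_\sigma D_a^{\alpha,\rho}\psi(t)
={}_\sigma I_a^{\alpha,\rho}\!\left(e^{-\sigma t^\rho}D_a^{\alpha,\rho}\!\left(e^{\sigma t^\rho}\psi(t)\right)\right)
$$
using Theorem \ref{Th5}, and then apply Lemma \ref{SFI} to the outer substantial integral, so that the $e^{-\sigma t^\rho}$ inside cancels and one is left with
$$
{}_\sigma I_a^{\alpha,\rho}\,{}_\sigma D_a^{\alpha,\rho}\psi(t)
=e^{-\sigma t^\rho}\,I_a^{\alpha,\rho}\,D_a^{\alpha,\rho}\!\left(e^{\sigma t^\rho}\psi(t)\right).
$$

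Next I would invoke property (P3) on the function $\phi(t):=e^{\sigma t^\rho}\psi(t)$, which, under the hypothesis ${}_\sigma I_a^{m-\alpha,\rho}\psi\in\Omega_{\sigma,\rho}^m[a,b]$, lies in $\Omega_\rho^m[a,b]$ (this is exactly the compatibility needed to justify the step). This produces
$$
I_a^{\alpha,\rho}\,D_a^{\alpha,\rho}\phi(t)=\phi(t)-\sum_{k=1}^{m}\frac{(t^\rho-a^\rho)^{\alpha-k}}{\Gamma(\alpha-k+1)}\lim_{s\to a^+}D_a^{\alpha-k,\rho}\phi(s).
$$

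The final step is to translate the boundary terms back to substantial operators. Using Theorem \ref{Th5} in the form $D_a^{\alpha-k,\rho}(e^{\sigma s^\rho}\psi(s))=e^{\sigma s^\rho}\,{}_\sigma D_a^{\alpha-k,\rho}\psi(s)$, the limits at $s\to a^+$ pick up a common factor $e^{\sigma a^\rho}$. Multiplying the whole bracket by the outer $e^{-\sigma t^\rho}$ turns $e^{\sigma t^\rho}\psi(t)$ into $\psi(t)$ and combines $e^{-\sigma t^\rho}e^{\sigma a^\rho}$ into $e^{-\sigma(t^\rho-a^\rho)}$, yielding the claimed formula. The special case $0<\alpha<1$ then drops out by taking $m=1$ and observing that ${}_\sigma D_a^{\alpha-1,\rho}\psi={}_\sigma I_a^{1-\alpha,\rho}\psi$ by Definition \ref{GSID}.

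The only genuine subtlety is the functional-analytic bookkeeping in the middle paragraph: one must check that the conjugation $\psi\mapsto e^{\sigma t^\rho}\psi$ sends $\Omega_{\sigma,\rho}^m[a,b]$ into $\Omega_\rho^m[a,b]$ and that the hypothesis on ${}_\sigma I_a^{m-\alpha,\rho}\psi$ translates into the corresponding hypothesis $I_a^{m-\alpha,\rho}\phi\in\Omega_\rho^m[a,b]$ required by (P3); this follows directly from Lemma \ref{SFI} and the definition of $\Omega_{\sigma,\rho}^m[a,b]$, but is the one place where an uncareful argument could slip. Everything else is a routine cancellation of exponential factors.
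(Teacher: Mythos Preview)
Your argument is correct and is a genuinely different route from the paper's. The paper proves the identity \emph{directly}: it first records the Leibniz-type relation
\[
{}_\sigma D^{1,\rho}\!\int_a^t\frac{s^{\rho-1}e^{-\sigma(t^\rho-s^\rho)}}{(t^\rho-s^\rho)^{-\alpha}}\,{}_\sigma D_a^{\alpha,\rho}\psi(s)\,ds
=\alpha\int_a^t\frac{s^{\rho-1}e^{-\sigma(t^\rho-s^\rho)}}{(t^\rho-s^\rho)^{1-\alpha}}\,{}_\sigma D_a^{\alpha,\rho}\psi(s)\,ds,
\]
uses it to rewrite ${}_\sigma I_a^{\alpha,\rho}{}_\sigma D_a^{\alpha,\rho}\psi$ with an outer ${}_\sigma D^{1,\rho}$, and then integrates by parts $m$ times, peeling off one boundary term $\displaystyle\frac{(t^\rho-a^\rho)^{\alpha-k}}{\Gamma(\alpha-k+1)}e^{-\sigma(t^\rho-a^\rho)}\lim_{s\to a^+}{}_\sigma D_a^{\alpha-k,\rho}\psi(s)$ at each step until the remaining integral collapses to $\psi(t)$.

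By contrast, you conjugate by $e^{\sigma t^\rho}$ via Lemma~\ref{SFI} and Theorem~\ref{Th5} to reduce immediately to the Katugampola identity (P3) applied to $\phi=e^{\sigma t^\rho}\psi$, and then translate the boundary data back. This is exactly the pattern the paper itself uses for the Caputo analogue (Theorem~\ref{caputoid}), so your proof is shorter and more in keeping with the surrounding results; the paper's integration-by-parts argument, on the other hand, is self-contained and does not depend on (P3) being available in the precise form needed. Your flagged subtlety is the right one: the hypothesis ${}_\sigma I_a^{m-\alpha,\rho}\psi\in\Omega_{\sigma,\rho}^m$ translates, via Lemma~\ref{SFI}, into $I_a^{m-\alpha,\rho}\phi\in\Omega_\rho^m$ (not $\phi\in\Omega_\rho^m$), which is indeed the natural regularity assumption under which the Riemann--Liouville inversion (P3) is valid; the slight mismatch with the hypothesis literally stated in (P3) is a wording issue in the paper rather than a gap in your argument.
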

\begin{proof}
	\allowdisplaybreaks
	Using Leibniz rule, following relation can be established.
	\begin{align}
	\Big(\frac{t^{1-\rho}}{\rho}\frac{d}{dt}+\sigma\Big) \int_a^t\frac{s^{\rho-1}
		e^{-\sigma(t^\rho-s^\rho)}}{(t^\rho-s^\rho)^{-\alpha}}{}_\sigma D_a^{\alpha,\rho }\psi(s) ds &= \alpha \int_a^t\frac{s^{\rho-1}
		e^{-\sigma(t^\rho-s^\rho)}}{(t^\rho-s^\rho)^{1-\alpha}}{}_\sigma D_a^{\alpha,\rho }\psi(s) ds. \label{equation1}
	\end{align}
	By definition of $ {}_\sigma I_a^{\alpha,\rho}$, we have
	\begin{align}
	{}_\sigma I_a^{\alpha,\rho}{}_\sigma D_a^{\alpha,\rho } \psi(t)&=\frac{\rho}{\Gamma(\alpha)}\int_a^t\frac{s^{\rho-1}
		e^{-\sigma(t^\rho-s^\rho)}}{(t^\rho-s^\rho)^{1-\alpha}}{}_\sigma D_a^{\alpha,\rho }\psi(s) ds. \label{equation2}
	\end{align}
	From Eq. \eqref{equation1} and \eqref{equation2}, we get
	\begin{align}
	{}_\sigma I_a^{\alpha,\rho}{}_\sigma D_a^{\alpha,\rho } \psi(t)&=\frac{\rho}{\Gamma(\alpha+1)} \Big(\frac{t^{1-\rho}}{\rho}\frac{d}{dt}+\sigma\Big) \int_a^t\frac{s^{\rho-1}
		e^{-\sigma(t^\rho-s^\rho)}}{(t^\rho-s^\rho)^{-\alpha}}{}_\sigma D_a^{\alpha,\rho }\psi(s) ds. \label{equation3}
	\end{align}
	From Definition \ref{GSID} and Eq. \eqref{equation3}, we find
	\begin{align*}
	{}_\sigma I_a^{\alpha,\rho}{}_\sigma D_a^{\alpha,\rho } \psi(t)&=\frac{\rho}{\Gamma(\alpha+1)} \Big(\frac{t^{1-\rho}}{\rho}\frac{d}{dt}+\sigma\Big) \int_a^t\frac{s^{\rho-1}
		e^{-\sigma(t^\rho-s^\rho)}}{(t^\rho-s^\rho)^{-\alpha}} \Big(\frac{s^{1-\rho}}{\rho}\frac{d}{ds}+\sigma\Big) {}_\sigma D^{m-1,\rho } {}_\sigma I_a^{m-\alpha,\rho} \psi(s) ds. 
	\end{align*}
	Applying integration by parts and product rule for classical derivatives, we have
	\begin{align*}
	{}_\sigma I_a^{\alpha,\rho}{}_\sigma D_a^{\alpha,\rho } \psi(t)&= - \frac{	e^{-\sigma(t^\rho-a^\rho)}}{\Gamma(\alpha) (t^\rho-s^\rho)^{1-\alpha}}\underset{s\to a^+}{\lim} {}_\sigma D^{m-1,\rho } {}_\sigma I_a^{m-\alpha,\rho} \psi(s) \\
	&\quad + \frac{\rho}{\Gamma(\alpha)} \Big(\frac{t^{1-\rho}}{\rho}\frac{d}{dt}+\sigma\Big) \int_a^t\frac{s^{\rho-1}
		e^{-\sigma(t^\rho-s^\rho)}}{(t^\rho-s^\rho)^{1-\alpha}} \Big(\frac{s^{1-\rho}}{\rho}\frac{d}{ds}+\sigma\Big)  {}_\sigma D^{m-2,\rho } {}_\sigma I_a^{m-\alpha,\rho} \psi(s) ds.  
	\end{align*}
	Continuing in this manner, we get
	\begin{align*}
	{}_\sigma I_a^{\alpha,\rho}{}_\sigma D_a^{\alpha,\rho } \psi(t)&=-e^{-\sigma (t^\rho - a^\rho)} \sum_{k=1}^{m}  \frac{(t^\rho-a^{\rho})^{\alpha-k} }{\Gamma(\alpha-k+1)}\underset{s\to a^+}{\lim} {}_\sigma D_{a}^{\alpha-k,\rho} \psi(s) \\
	&\quad  + \frac{\rho}{\Gamma(\alpha-m+1)} \Big(\frac{t^{1-\rho}}{\rho}\frac{d}{dt}+\sigma\Big) \int_a^t\frac{s^{\rho-1}
		e^{-\sigma(t^\rho-s^\rho)}}{(t^\rho-s^\rho)^{m-\alpha}}   {}_\sigma I_a^{m-\alpha,\rho} \psi(s) ds, 
	\end{align*}
	where 
	\begin{align*}
	\frac{\rho}{\Gamma(\alpha-m+1)} \Big(\frac{t^{1-\rho}}{\rho}\frac{d}{dt}+\sigma\Big) \int_a^t\frac{s^{\rho-1}
		e^{-\sigma(t^\rho-s^\rho)}}{(t^\rho-s^\rho)^{m-\alpha}}   {}_\sigma I_a^{m-\alpha,\rho} \psi(s) ds&=\psi(t). 
	\end{align*}
	Finally, we get the desired result
	$$
	{}_\sigma I_a^{\alpha,\rho}{}_\sigma D_a^{\alpha,\rho } \psi(t)=\psi(t)-  e^{-\sigma (t^\rho - a^\rho)} \sum_{k=1}^{m} 
	\frac{(t^\rho-a^{\rho})^{\alpha-k} }{\Gamma(\alpha-k+1)}\underset{s\to a^+}{\lim} {}_\sigma D_{a}^{\alpha-k,\rho} \psi(s)\nonumber
	.$$
\end{proof}

\begin{theorem}\label{Th6} Assume $ \psi\in \Omega_{\sigma,\rho}^m[a,b]$.
	Then generalized Caputo type substantial  derivative can be written as\\
	$$
	{}_\sigma^c{D}_a^{\alpha,\rho}\psi(t)={}_\sigma {I}_a^{m-\alpha,\rho}{}_\sigma D^{m,\rho}\psi(t)
	=\frac{\rho}{\Gamma(m-\alpha)}
	\int_a^t\frac{e^{-\sigma(t^\rho-s^\rho)}}{(t^\rho-s^\rho)^{1+\alpha-m}}{}_\sigma D^{m,\rho}(\psi(s)) ds.
	$$
\end{theorem}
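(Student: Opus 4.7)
The second equality in the statement is just the defining integral of ${}_\sigma I_a^{m-\alpha,\rho}$ applied to ${}_\sigma D^{m,\rho}\psi$, so the real content is the first equality. My strategy is to reduce the claim to its Katugampola analogue via the substitution $\phi(t):=e^{\sigma t^\rho}\psi(t)$, using Lemmas \ref{Lem1} and \ref{SFI} together with Theorem \ref{Th5} to move between the ordinary Katugampola and the substantial settings.

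First I would record two consequences of Lemma \ref{Lem1}: with $\phi(t)=e^{\sigma t^\rho}\psi(t)$, one has ${}_\sigma D^{k,\rho}\psi(t)=e^{-\sigma t^\rho}D^{k,\rho}\phi(t)$ for every $k$, and in particular ${}_\sigma D^{k,\rho}\psi(a)=e^{-\sigma a^\rho}D^{k,\rho}\phi(a)$. Multiplying the bracketed quantity inside Definition \ref{def5} by $e^{\sigma t^\rho}$ then shows that $e^{-\sigma(t^\rho-a^\rho)}$ cancels the shift $e^{\sigma a^\rho}$ and the expression simplifies to
\begin{equation*}
e^{\sigma t^\rho}\Big(\psi(t)-\sum_{k=0}^{m-1}\frac{{}_\sigma D^{k,\rho}\psi(a)}{k!}(t^\rho-a^\rho)^k e^{-\sigma(t^\rho-a^\rho)}\Big)=\phi(t)-\sum_{k=0}^{m-1}\frac{D^{k,\rho}\phi(a)}{k!}(t^\rho-a^\rho)^k.
\end{equation*}
By Theorem \ref{Th5}, applying ${}_\sigma D_a^{\alpha,\rho}$ to the $\psi$-side equals $e^{-\sigma t^\rho}$ times $D_a^{\alpha,\rho}$ applied to the $\phi$-side.

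Next I would exploit the Katugampola Taylor expansion \eqref{TS1}, which identifies exactly the right-hand side above as $I_a^{m,\rho}D^{m,\rho}\phi(t)$. Since $m\geq\alpha$, property (P2) gives $D_a^{\alpha,\rho}I_a^{m,\rho}D^{m,\rho}\phi(t)=I_a^{m-\alpha,\rho}D^{m,\rho}\phi(t)$, whence
\begin{equation*}
{}_\sigma^c D_a^{\alpha,\rho}\psi(t)=e^{-\sigma t^\rho}I_a^{m-\alpha,\rho}D^{m,\rho}\phi(t).
\end{equation*}

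Finally I would convert back to substantial form. Using the second identity of Lemma \ref{Lem1}, $D^{m,\rho}\phi(t)=D^{m,\rho}(e^{\sigma t^\rho}\psi(t))=e^{\sigma t^\rho}{}_\sigma D^{m,\rho}\psi(t)$, and then Lemma \ref{SFI} with exponent $m-\alpha$ yields
\begin{equation*}
e^{-\sigma t^\rho}I_a^{m-\alpha,\rho}\bigl(e^{\sigma\cdot^\rho}{}_\sigma D^{m,\rho}\psi\bigr)(t)={}_\sigma I_a^{m-\alpha,\rho}{}_\sigma D^{m,\rho}\psi(t),
\end{equation*}
which gives the first equality. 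The integral formula then follows by writing out Definition \ref{GSID}. The main conceptual step is recognizing the bracketed Taylor-remainder form so that (P2) and \eqref{TS1} apply cleanly; the rest is just careful tracking of the $e^{\pm\sigma t^\rho}$ factors, and the hypothesis $\psi\in\Omega_{\sigma,\rho}^m[a,b]$ ensures $\phi\in\Omega_\rho^m[a,b]$ so all intermediate operators are legitimate.
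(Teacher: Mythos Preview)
Your proof is correct. Both you and the paper hinge on recognizing the bracketed expression in Definition~\ref{def5} as a Taylor remainder, but you take a different route to exploit it. The paper stays entirely in the substantial picture: it invokes the generalized Taylor expansion~\eqref{TS2} to rewrite the bracket directly as ${}_\sigma I_a^{m,\rho}{}_\sigma D^{m,\rho}\psi(t)$, and then simplifies ${}_\sigma D_a^{\alpha,\rho}{}_\sigma I_a^{m,\rho}{}_\sigma D^{m,\rho}\psi$ using the substantial semigroup property (Theorem~\ref{theorem3}) and the identity ${}_\sigma D^{m,\rho}{}_\sigma I_a^{m,\rho}=\mathrm{id}$. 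You instead conjugate by $e^{\sigma t^\rho}$ via Theorem~\ref{Th5} and Lemma~\ref{Lem1}, reduce to the ordinary Katugampola Taylor formula~\eqref{TS1} and property~(P2), and then conjugate back with Lemma~\ref{SFI}. The paper's argument is shorter and self-contained within the substantial calculus just developed; your detour through the Katugampola setting is a bit longer but has the virtue of making explicit that Theorem~\ref{Th6} is literally the conjugate of its Katugampola analogue, which is conceptually clarifying and reuses only pre-established Katugampola facts rather than the substantial composition rules.
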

\begin{proof}
	By definition \ref{def5} and Equation \eqref{TS2} we have
	${}_\sigma^c{D}_a^{\alpha,\rho}\psi(t)={}_\sigma{D}_a^{\alpha,\rho}{}_\sigma I^{m,\rho}{}_\sigma D^{m,\rho}\psi(t)$. An application of definition \ref{GSCaputo}, and properties $(P1)$ and $(P2)$   leads us to 
	\begin{align*}
	{}_\sigma^c{D}_a^{\alpha,\rho}\psi(t)
	&={}_\sigma{D}^{m,\rho}{}_\sigma {}I_a^{m-\alpha,\rho}{}_\sigma I_a^{m,\rho}{}_\sigma D^{m,\rho}\psi(t)
	={}_\sigma{D}^{m,\rho}{} {}_\sigma{I}_a^{m,\rho} {}_\sigma I_a^{m-\alpha,\rho}{}_\sigma D^{m,\rho}\psi(t)\\
	&={}_\sigma I_a^{m-\alpha,\rho}{}_\sigma D^{m,\rho}\psi(t).
	\end{align*}
\end{proof}
\begin{lemma} \label{Lm26} For  $ \psi\in \Omega_{\rho}^m[a,b]$, the  operator ${}_\sigma^c{D}_a^{\alpha,\rho}$ satisfies the relation
	${}_\sigma^c{D}_a^{\alpha,\rho}\psi(t)=e^{-\sigma t^\rho}{}^cD_{a}^{\alpha,\rho}(e^{\sigma t^\rho}\psi(t)).$
\end{lemma}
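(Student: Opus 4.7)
The plan is to combine three earlier identities into a short chain, so I do not need to touch the integral kernel directly at any stage. First I would invoke Theorem~\ref{Th6} to rewrite the left-hand side in operator form as
$${}_\sigma^c D_a^{\alpha,\rho}\psi(t) = {}_\sigma I_a^{m-\alpha,\rho}\,{}_\sigma D^{m,\rho}\psi(t),$$
which removes the subtraction of the Taylor-type polynomial in Definition~\ref{def5} and puts the derivative inside the integral.

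Next I would strip the $\sigma$ off the outer integral using Lemma~\ref{SFI} with the function $\phi(t) := {}_\sigma D^{m,\rho}\psi(t)$, giving
$${}_\sigma I_a^{m-\alpha,\rho}\,{}_\sigma D^{m,\rho}\psi(t) = e^{-\sigma t^\rho}\, I_a^{m-\alpha,\rho}\!\left(e^{\sigma t^\rho}\,{}_\sigma D^{m,\rho}\psi(t)\right).$$
Then I would apply the conjugation identity $e^{\sigma t^\rho}\,{}_\sigma D^{m,\rho}\psi(t) = D^{m,\rho}(e^{\sigma t^\rho}\psi(t))$ from Lemma~\ref{Lem1} to the integrand. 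Finally, recognising the resulting expression $I_a^{m-\alpha,\rho}D^{m,\rho}(e^{\sigma t^\rho}\psi(t))$ as the Caputo-type Katugampola derivative of $e^{\sigma t^\rho}\psi(t)$ via Definition~\ref{KID} yields $e^{-\sigma t^\rho}\,{}^c D_a^{\alpha,\rho}(e^{\sigma t^\rho}\psi(t))$, which is exactly the right-hand side.

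The only non-routine point is verifying that the intermediate application of Lemma~\ref{Lem1} and of the Caputo-Katugampola definition is legitimate, i.e.\ that $e^{\sigma t^\rho}\psi(t)\in \Omega_\rho^m[a,b]$ when $\psi\in \Omega_\rho^m[a,b]$. This follows from the smoothness of $t\mapsto e^{\sigma t^\rho}$ on $[a,b]$ and the Leibniz rule, since each $D^{k,\rho}$ applied to the product reproduces absolutely continuous factors. I expect this to be the only step requiring an explicit remark; everything else is a direct chaining of three identities that have already been proved in the excerpt, so the proof should fit in a few lines.
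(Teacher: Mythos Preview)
Your proposal is correct and follows essentially the same route as the paper's own proof: the paper also chains Theorem~\ref{Th6}, then Lemma~\ref{SFI}, then Lemma~\ref{Lem1}, and finally the definition of the Caputo--Katugampola derivative, in exactly the order you describe. Your extra remark verifying that $e^{\sigma t^\rho}\psi(t)\in \Omega_\rho^m[a,b]$ is a nice bit of rigor that the paper leaves implicit.
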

\begin{proof} By using Lemma \ref{Lem1}, Lemma \ref{SFI} and Theorem \ref{Th6} we have
	\begin{align*}
	{}_\sigma^c{D}_a^{\alpha,\rho}\psi(t)&={}_\sigma I_a^{m-\alpha,\rho}{}_\sigma D^{m,\rho}\psi(t)
	=e^{-\sigma t^\rho}{I}_a^{m-\alpha,\rho}(e^{\sigma t^\rho}{}_\sigma D^{m,\rho}\psi(t))\\
	&=e^{-\sigma t^\rho}{I}_a^{m-\alpha,\rho}{}D^{m,\rho}(e^{\sigma t^\rho}\psi(t))
	=e^{-\sigma t^\rho}{} ^c{D}_a^{\alpha,\rho}(e^{\sigma t^\rho}\psi(t)).
	\end{align*}
\end{proof}

\begin{theorem}\label{theorem5} Let $m-1<\alpha\leq m$, $\beta\geq \alpha$ and $ \psi \in \Omega_{\sigma,\rho}^m [a,b]$. Then
	$
	{}_\sigma^c{D}_a^{\alpha,\rho} {}_\sigma I_a^{\beta,\rho} \psi(t)={}_\sigma I_a^{\beta-\alpha,\rho}\psi(t).
	$
\end{theorem}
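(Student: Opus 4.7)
The plan is to reduce the identity to the corresponding Katugampola composition rule (P2) via the exponential conjugation between the generalized substantial operators and the Katugampola operators. The two key tools I would use are Lemma \ref{SFI}, which gives ${}_\sigma I_a^{\beta,\rho}\psi(t)=e^{-\sigma t^\rho}I_a^{\beta,\rho}(e^{\sigma t^\rho}\psi(t))$, and Lemma \ref{Lm26}, which gives ${}_\sigma^c D_a^{\alpha,\rho}\phi(t)=e^{-\sigma t^\rho}{}^c D_a^{\alpha,\rho}(e^{\sigma t^\rho}\phi(t))$.

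First I would set $\phi(t)={}_\sigma I_a^{\beta,\rho}\psi(t)$ and apply Lemma \ref{Lm26} to write
\[
{}_\sigma^c D_a^{\alpha,\rho}{}_\sigma I_a^{\beta,\rho}\psi(t)=e^{-\sigma t^\rho}{}^c D_a^{\alpha,\rho}\bigl(e^{\sigma t^\rho}{}_\sigma I_a^{\beta,\rho}\psi(t)\bigr).
\]
Next I would substitute Lemma \ref{SFI} inside the bracket; the two exponentials cancel, leaving $I_a^{\beta,\rho}(e^{\sigma t^\rho}\psi(t))$ inside the Caputo-type Katugampola derivative.

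At this point property (P2) for the Caputo-type Katugampola operator applies directly (the hypothesis $\beta\geq\alpha$ is exactly what (P2) requires), yielding
\[
{}^c D_a^{\alpha,\rho}I_a^{\beta,\rho}\bigl(e^{\sigma t^\rho}\psi(t)\bigr)=I_a^{\beta-\alpha,\rho}\bigl(e^{\sigma t^\rho}\psi(t)\bigr).
\]
Multiplying by the leading $e^{-\sigma t^\rho}$ and invoking Lemma \ref{SFI} in the reverse direction recognizes the right-hand side as ${}_\sigma I_a^{\beta-\alpha,\rho}\psi(t)$, closing the proof.

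I do not expect any serious obstacle here. The only point to check is that $\phi(t)=e^{\sigma t^\rho}{}_\sigma I_a^{\beta,\rho}\psi(t)=I_a^{\beta,\rho}(e^{\sigma t^\rho}\psi(t))$ lies in the function space required to apply (P2) and Lemma \ref{Lm26}; this follows because $\psi\in\Omega_{\sigma,\rho}^m[a,b]$ means $e^{\sigma t^\rho}t^{1-\rho}\psi\in AC^m[a,b]$, so $e^{\sigma t^\rho}\psi\in\Lambda_\rho^1[a,b]$ and the Katugampola integral $I_a^{\beta,\rho}$ of a sufficiently smooth function retains enough regularity to be handled by the standard Katugampola composition rule. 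Once this bookkeeping is in place, the computation is a one-line chain of identities.
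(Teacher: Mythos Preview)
Your proposal is correct and is exactly the approach the paper indicates: the paper simply states that the result follows by using Lemma~\ref{SFI} and Lemma~\ref{Lm26}, which is precisely your exponential-conjugation reduction to property~(P2). There is nothing to add.
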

By using Lemma \ref{SFI}  and Lemma \ref{Lm26}, the result can easily be proved.

\begin{theorem} \label{caputoid}
	Assume $m-1<\alpha\leq m$ and   $ \psi\in \Omega_{\sigma,\rho}^m[a,b]$.
	Then $$  {}_\sigma{I}_a^{\alpha,\rho}{}_\sigma^c{D}_a^{\alpha,\rho}\psi(t)
	=\psi(t)-\sum_{k=0}^{m-1}\frac{{}  {}_\sigma{D}^{k,\rho}\psi(a)}{k!}e^{-\sigma(t^\rho-a^\rho)}(t^\rho-a^\rho)^k.$$
\end{theorem}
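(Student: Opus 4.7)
The natural strategy is to reduce the identity to the analogous Katugampola identity stated as property (P4), by exploiting the conjugation-by-exponential relations already established. Specifically, Lemmas \ref{SFI} and \ref{Lm26} give
$$_\sigma I_a^{\alpha,\rho}\phi(t) = e^{-\sigma t^\rho}I_a^{\alpha,\rho}(e^{\sigma t^\rho}\phi(t)), \qquad {}_\sigma^c D_a^{\alpha,\rho}\psi(t) = e^{-\sigma t^\rho}\,{}^cD_a^{\alpha,\rho}(e^{\sigma t^\rho}\psi(t)).$$
Applying the first with $\phi = {}_\sigma^c D_a^{\alpha,\rho}\psi$ and substituting the second, the exponential factors collapse, so that
$$_\sigma I_a^{\alpha,\rho}\,{}_\sigma^c D_a^{\alpha,\rho}\psi(t) = e^{-\sigma t^\rho}I_a^{\alpha,\rho}\,{}^cD_a^{\alpha,\rho}\bigl(e^{\sigma t^\rho}\psi(t)\bigr).$$

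Next I would invoke property (P4) applied to the auxiliary function $\Phi(t):=e^{\sigma t^\rho}\psi(t)$, which lies in $\Omega_\rho^m[a,b]$ by the definition of $\Omega_{\sigma,\rho}^m$ (modulo the convention regarding the factor $t^{1-\rho}$, which is absorbed consistently). This yields
$$I_a^{\alpha,\rho}\,{}^cD_a^{\alpha,\rho}\Phi(t) = \Phi(t) - \sum_{k=0}^{m-1}\frac{D^{k,\rho}\Phi(a)}{k!}(t^\rho-a^\rho)^k.$$

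The final step is to translate the initial values $D^{k,\rho}\Phi(a)$ back into the substantial derivatives ${}_\sigma D^{k,\rho}\psi(a)$. This is exactly the content of the second identity in Lemma \ref{Lem1}, which asserts $D^{k,\rho}(e^{\sigma t^\rho}\psi(t)) = e^{\sigma t^\rho}\,{}_\sigma D^{k,\rho}\psi(t)$; evaluating at $t=a$ and multiplying through by $e^{-\sigma t^\rho}$ from the outside produces the factor $e^{-\sigma(t^\rho-a^\rho)}$ that appears in the stated formula.

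I do not anticipate a genuine obstacle here: once the two conjugation lemmas and (P4) are lined up, the proof is essentially algebraic bookkeeping with the exponential factors. The only point requiring a little care is verifying that $\Phi = e^{\sigma t^\rho}\psi \in \Omega_\rho^m[a,b]$ so that (P4) is applicable; this follows immediately from the definition $\Omega_{\sigma,\rho}^m[a,b] = \{\psi : e^{\sigma t^\rho}t^{1-\rho}\psi \in AC^m[a,b]\}$ given in Section 2, so the hypothesis $\psi\in\Omega_{\sigma,\rho}^m[a,b]$ delivers exactly what is needed.
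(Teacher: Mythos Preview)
Your proposal is correct and follows essentially the same approach as the paper: conjugate by the exponential via Lemmas~\ref{SFI} and~\ref{Lm26}, apply property~(P4) to $e^{\sigma t^\rho}\psi(t)$, and then use Lemma~\ref{Lem1} to convert $D^{k,\rho}(e^{\sigma t^\rho}\psi)(a)$ into $e^{\sigma a^\rho}\,{}_\sigma D^{k,\rho}\psi(a)$. Your additional remark verifying that $\Phi\in\Omega_\rho^m[a,b]$ is a welcome clarification that the paper leaves implicit.
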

\begin{proof}
	From Lemma \ref{Lm26} and Lemma \ref{SFI} we have \\
	${}_\sigma{I}_a^{\alpha,\rho}{}_\sigma^c{D}_a^{\alpha,\rho}\psi(t)
	={}_\sigma{I}_a^{\alpha,\rho}\left(e^{-\sigma t^\rho}{}^c{D}_a^{\alpha,\rho}(e^{\sigma t^\rho}\psi(t))\right)
	=e^{-\sigma t^\rho}{I}_a^{\alpha,\rho}({}^c{D}_a^{\alpha,\rho}(e^{\sigma t^\rho}\psi(t))).$\\
	Now by property $(P4)$ we have
	\begin{align*}
	{}_\sigma{I}_a^{\alpha,\rho}{}_\sigma^c{D}_a^{\alpha,\rho}\psi(t)&=
	e^{-\sigma t^\rho}\Big(e^{\sigma t^\rho}\psi(t)-\sum_{k=0}^{m-1}\frac{ D^{k,\rho}(e^{\sigma t^\rho}\psi(t))|_{t=a}}
	{k!}(t^\rho-a^\rho)^k\Big)
	\\&=\psi(t)-\sum_{k=0}^{m-1}\frac{{}  {}_\sigma D^{k,\rho}\psi(a)}{k!}e^{-\sigma(t^\rho-a^\rho)}(t^\rho-a^\rho)^k.
	\end{align*}
\end{proof}
\begin{figure}[h!]
	\centering
	\subfloat[ $\beta=2, \sigma=1, \rho=1.5$, $0.7\leq\alpha\leq 1$.]
	{\label{Fig1a}\includegraphics[width=0.5\textwidth]{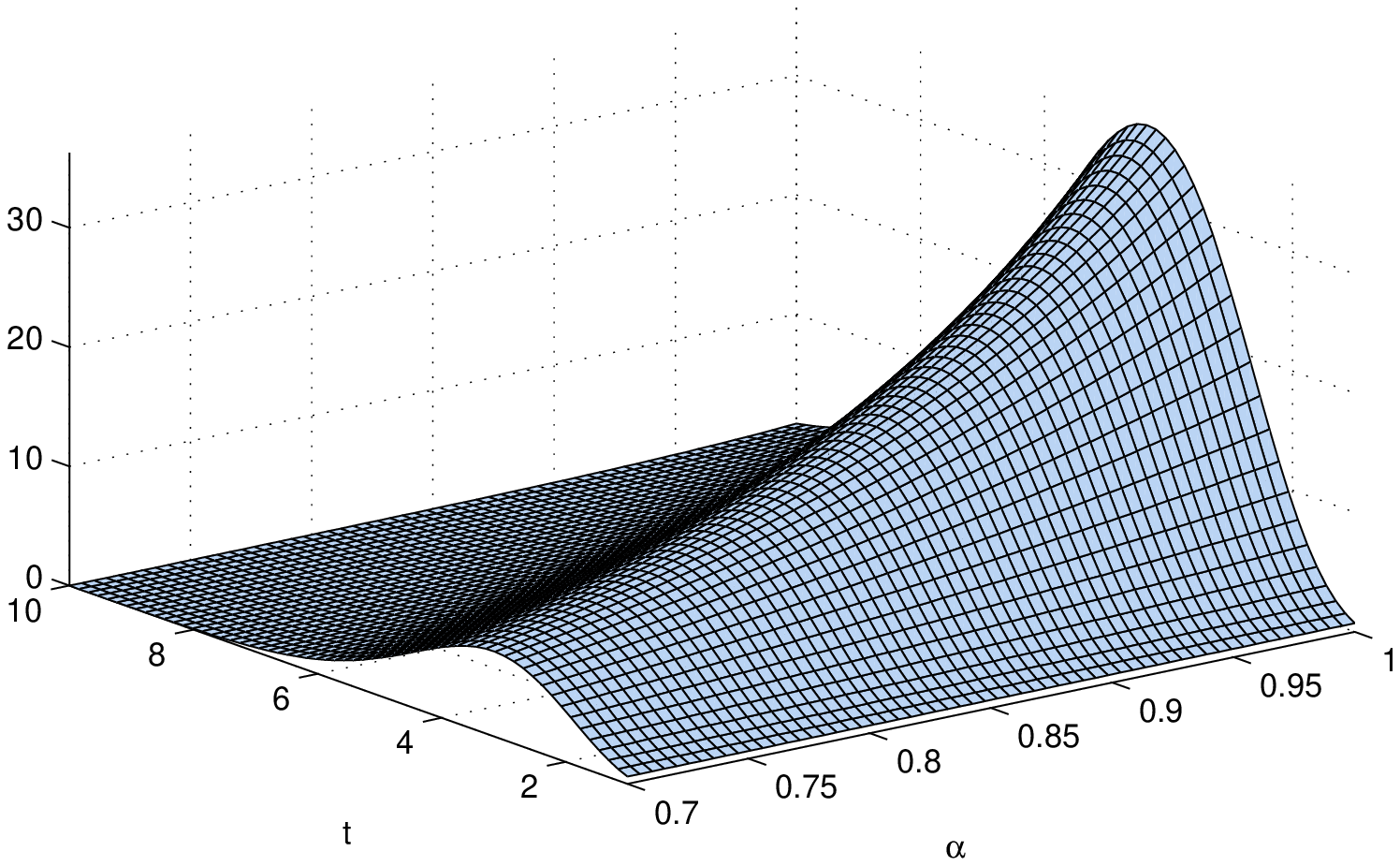}}
	\subfloat[$\alpha=0.5, \beta=2,\sigma=1, 1< \rho\leq 5$.]
	{\label{Fig1b}\includegraphics[width=0.5\textwidth]{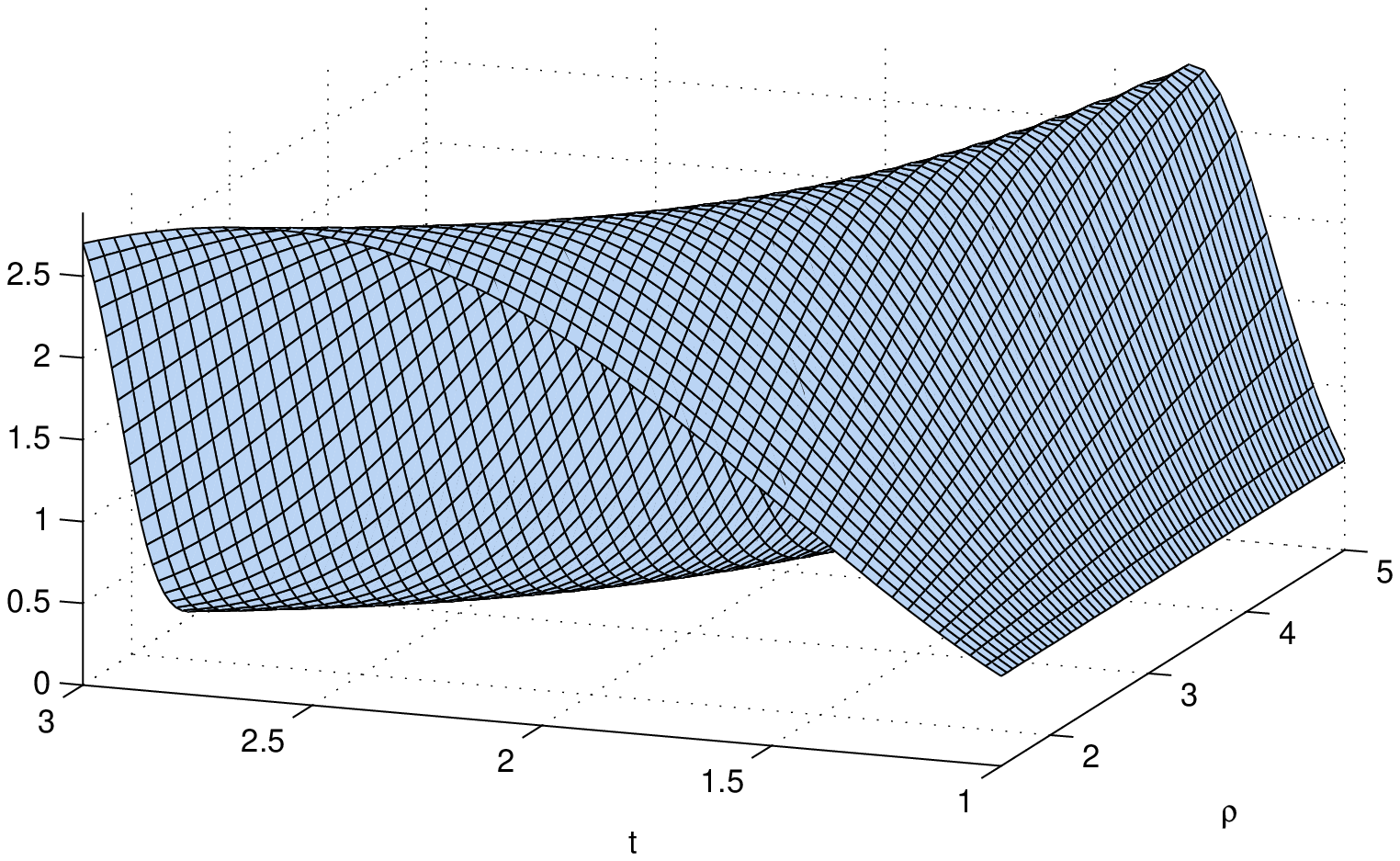}}\\
	\subfloat[$\alpha=0.5, \beta=2, \rho=2, 1.5\leq \sigma\leq 5$.]
	{\label{Fig1c}\includegraphics[width=0.5\textwidth]{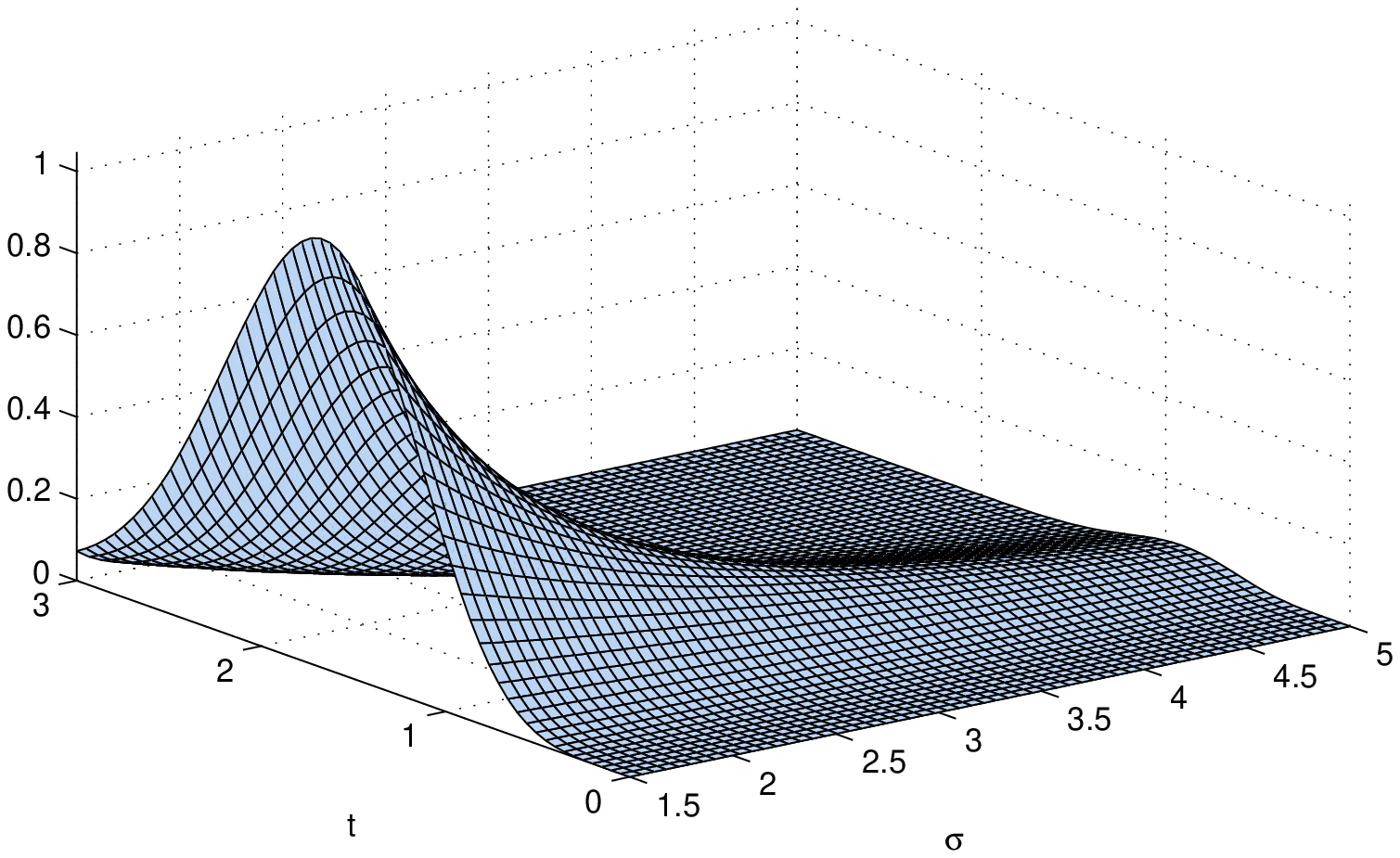}}
	\caption{Fractional integrals ${}_\sigma I_a^{\alpha,\rho}$ of  $\psi(t)=(t^\rho-a^\rho)^{\beta}e^{-\sigma t^\rho}$. }
	\label{Fig1}
\end{figure}
\begin{example}\label{exp} Consider $\psi(t)=(t^\rho-a^\rho)^{\beta}e^{-\sigma t^\rho}$. Then from Lemma \ref{SFI} we have
	${}_\sigma I_a^{\alpha,\rho}\psi(t)=e^{-\sigma t^{\rho}}I_a^{\alpha,\rho}(t^\rho-a^\rho)^{\beta}$. Now by Lemma 3 in \cite{Oliveira} we have 
	\begin{equation}\label{EX1}
	{}_\sigma I_a^{\alpha,\rho}\psi(t)=\frac{\Gamma(\beta+1)e^{-\sigma t^{\rho}}}{\Gamma(\alpha+\beta+1)}(t^\rho-a^\rho)^{\alpha+\beta}.
	\end{equation}
	Fractional integrals of  $\psi(t)$, for different values of $\alpha$, $\beta$, $\sigma$ and $\rho$ are graphically illustrated in Fig. \ref{Fig1}.
	Now we compute Riemann-Liouville substantial derivative of  $\psi(t)=(t^\rho-a^\rho)^{\beta}e^{-\sigma t^\rho}$. Note that 
	\begin{equation}\label{EX2}
	D^{1,\rho}(t^\rho-a^\rho)^\beta=
	\frac{t^{1-\rho}}{\rho}\frac{d}{dt}(t^\rho-a^\rho)^\beta
	=\beta(t^\rho-a^\rho)^{\beta-1}.
	\end{equation}
	Therefore, from definition  of Riemann-Liouville substantial derivative, Lemma \eqref{Lem1} and equation \eqref{EX2} we have
	\begin{align*}
	{}_\sigma D_a^{\alpha,\rho}\psi(t)&={}_\sigma D^{1,\rho}{}_\sigma I_a^{1-\alpha,\rho}\left[e^{-\sigma t^{\rho}}(t^\rho-a^\rho)^\beta\right]
	=\frac{\Gamma(\beta+1)}{\Gamma(\beta-\alpha+2)}{}_\sigma D^{1,\rho}\left[e^{-\sigma t^{\rho}}(t^\rho-a^\rho)^{\beta-\alpha+1}\right]\\
	&=\frac{\Gamma(\beta+1)e^{-\sigma t^{\rho}}}{\Gamma(\beta-\alpha+2)}D^{1,\rho}(t^\rho-a^\rho)^{\beta-\alpha+1}
	=\frac{\Gamma(\beta+1)e^{-\sigma t^{\rho}}}{\Gamma(\beta-\alpha+1)}(t^\rho-a^\rho)^{\beta-\alpha}.
	\end{align*}
	Similarly, Caputo type substantial derivative of  $\psi(t)=(t^\rho-a^\rho)^{\beta}e^{-\sigma t^\rho}$ can be computed as
	\begin{align*}
	{}^c_\sigma D_a^{\alpha,\rho}\psi(t)&
	={}_\sigma I_a^{1-\alpha,\rho}{}_\sigma D^{1,\rho}\left[e^{-\sigma t^{\rho}}(t^\rho-a^\rho)^\beta\right]
	={}_\sigma I_a^{1-\alpha,\rho}\left[ e^{-\sigma t^{\rho}}D^{1,\rho}((t^\rho-a^\rho)^\beta)\right]\\
	&={}_\sigma I_a^{1-\alpha,\rho} e^{-\sigma t^{\rho}}D^{1,\rho}(t^\rho-a^\rho)^\beta
	=e^{-\sigma t^{\rho}}I_a^{1-\alpha,\rho} D^{1,\rho}(t^\rho-a^\rho)^\beta\\
	&=\beta e^{-\sigma t^{\rho}}I_a^{1-\alpha,\rho} (t^\rho-a^\rho)^{\beta-1}
	=\frac{\Gamma(\beta+1)e^{-\sigma t^{\rho}}}{\Gamma(\beta-\alpha+1)}(t^\rho-a^\rho)^{\beta-\alpha}.
	\end{align*}
\end{example}


\section{Existence and uniqueness of solutions}

When it comes to the problem of solving a fractional differential equation, the existence and uniqueness results have their own importance. It is necessary to notice in advance whether there is a solution to a given fractional differential equation.  With this in view, here we prove the equivalence between initial value problem (IVP) and Volterra equation. Then, using this equivalence along-with Weissinger’s fixed point theorem, we prove the existence and uniqueness of solution for the following IVP
\begin{align}
{}_\sigma^c{D}_{0}^{\alpha,\rho}\psi(t)&=f(t,\psi(t)),\ \ \ t>0, \label{de}  \\
{}_{\sigma}D_{}^{k,\rho}\psi(0)&=b_k,   \ \ k \in \left\{0,1,2,...,m-1 \right\}, \label{ic}
\end{align}
where $ \sigma>0 $, $ \rho>0$, $\alpha>0 $, $ m= \lceil \alpha \rceil $, $ 	{}_\sigma^c{D}_{0}^{\alpha,\rho} $ is the generalized Caputo-type substantial fractional derivative and $ f: \R^{+} \times \R \to \R.$

For $ K>0 $, $ h^{*}>0 $ and $ b_1,...,b_m \in \R $, define the set
$$
H:=  \left\{ (t,\psi(t)):  0 \leq t \leq h^{*}, \left| \psi(t)- e^{-\sigma t^\rho}  \sum_{k=0}^{m-1}\frac{b_k}{\Gamma(k+1)}  t^{\rho k} \right| \leq K  \right\}.	$$
Following will be assumed while establishing the subsequent results.

\begin{itemize}
	\item[(H1)] $ f:H \to \R $ is both continuous and bounded in $ H $;
	
	\item[(H2)] $ f $  satisfies the Lipschitz condition with respect to the second variable, i.e. for some constant $ L>0 $ and for all $ (t,\psi(t)),(t,\tilde{\psi}(t)) \in  H $, we have
	
	$$   
	|f(t,\psi(t))-f(t,\tilde{\psi}(t))|\leq L|\psi(t)-\tilde{\psi}(t)|.
	$$
	
\end{itemize}

For convenience, we introduce some notations. Let
$
h:= \min \left\{ h^{*} , \tilde{h} , \Big(\frac{\Gamma(\alpha+1)K}{M} \Big)^{\frac{1}{\rho \alpha}} \right\}  	   
$
where $ M:=  \sup_{(x,y )\in H} |f(x,y)| $ and $ \tilde{h} $ being a positive real number, fulfills the inequality
$
\tilde{h}< \Big(\frac{\Gamma(\alpha+1)}{L} \Big)^{\frac{1}{\rho \alpha}}.
$ These notations appear frequently in this section. The main results of this section are the generalizations of existence and uniqueness results presented in \cite{U.N.,Morgado,N. J.}.

\begin{theorem} \label{equivalence}
	Assume that $ h>0 $ and $ f: \R^{+} \times \R \to \R $ is continuous. Then $ \psi \in C[0,h] $ is the solution of IVP $ \left(\ref{de}\right)$--$ \left(\ref{ic}\right) $
	if and only if $ \psi \in C[0,h] $ satisfies the Volterra equation
	$$  \psi(t)=e^{-\sigma t^\rho}  \sum_{k=0}^{m-1}\frac{b_k  }{\Gamma(k+1)}t^{\rho k}+\frac{\rho}{\Gamma(\alpha)}\int_0^t\frac{
		e^{-\sigma(t^\rho-s^\rho)}}{(t^\rho-s^\rho)^{1-\alpha}} s^{\rho-1} f(s,\psi(s)) ds.	     $$
\end{theorem}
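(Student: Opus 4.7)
The plan is to establish the two implications of the equivalence separately, relying on the composition identities developed in Section~3.

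\textbf{Forward implication ($\Rightarrow$).} Suppose $\psi\in C[0,h]$ solves \eqref{de}--\eqref{ic}. I would apply ${}_\sigma I_0^{\alpha,\rho}$ to both sides of \eqref{de}. On the left, Theorem \ref{caputoid} with $a=0$ yields
$${}_\sigma I_0^{\alpha,\rho}\,{}_\sigma^c D_0^{\alpha,\rho}\psi(t)=\psi(t)-\sum_{k=0}^{m-1}\frac{{}_\sigma D^{k,\rho}\psi(0)}{k!}\,e^{-\sigma t^\rho}t^{\rho k}.$$
On the right, one gets ${}_\sigma I_0^{\alpha,\rho}f(t,\psi(t))$, whose integral representation is provided by Definition \ref{GSID}. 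Substituting ${}_\sigma D^{k,\rho}\psi(0)=b_k$ from \eqref{ic} and rearranging produces exactly the Volterra equation.

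\textbf{Reverse implication ($\Leftarrow$).} Suppose $\psi\in C[0,h]$ satisfies the Volterra equation, and decompose $\psi=P+J$ where $P(t)=e^{-\sigma t^\rho}\sum_{k=0}^{m-1}\frac{b_k}{k!}t^{\rho k}$ and $J(t)={}_\sigma I_0^{\alpha,\rho}f(t,\psi(t))$. I would verify \eqref{de} and \eqref{ic} in turn. To recover \eqref{de}, apply ${}_\sigma^c D_0^{\alpha,\rho}$ to both sides: a direct computation using Lemma \ref{Lem1} shows ${}_\sigma D^{k,\rho}P(0)=b_k$ for $k=0,\ldots,m-1$, so the Taylor-type sum subtracted in Definition \ref{def5} coincides with $P$ itself, forcing ${}_\sigma^c D_0^{\alpha,\rho}P\equiv 0$; for the integral term, Theorem \ref{theorem5} with $\beta=\alpha$ gives ${}_\sigma^c D_0^{\alpha,\rho}J(t)=f(t,\psi(t))$. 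To recover \eqref{ic}, apply ${}_\sigma D^{k,\rho}$ to the Volterra equation for $k\in\{0,\dots,m-1\}$ and send $t\to 0^+$: the $P$-contribution yields exactly $b_k$, while the $J$-contribution can be rewritten via Lemma \ref{SFI} as $J(t)=e^{-\sigma t^\rho}I_0^{\alpha,\rho}\bigl(e^{\sigma s^\rho}f(s,\psi(s))\bigr)$ and then, by Lemma \ref{Lem1} together with property~(P2), reduces to ${}_\sigma D^{k,\rho}J(t)={}_\sigma I_0^{\alpha-k,\rho}f(t,\psi(t))$, which vanishes at $t=0^+$ for every $k<\alpha$ because $f(\cdot,\psi(\cdot))$ is bounded and the kernel $(t^\rho-s^\rho)^{\alpha-k-1}s^{\rho-1}$ is integrable.

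\textbf{Main obstacle.} The most delicate point is regularity: the Volterra equation only immediately produces $\psi\in C[0,h]$, whereas Theorem \ref{theorem5} and Definition \ref{def5} are nominally stated for $\psi\in\Omega_{\sigma,\rho}^m[0,h]$. I would circumvent this by exploiting the explicit decomposition $\psi=P+J$: the exponential-polynomial $P$ is smooth, and $J$ is a substantial fractional integral of a continuous function, which is enough to justify both the composition rules and the passage of $D^{k,\rho}$ inside the integral for $k<\alpha$. Dominated convergence together with the integrability of the weight at $s=0$ then ensures that the boundary evaluations at $t=0$ behave as claimed, closing the argument.
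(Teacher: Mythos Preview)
Your proposal is correct and follows essentially the same two-step scheme as the paper: apply ${}_\sigma I_0^{\alpha,\rho}$ together with Theorem~\ref{caputoid} for the forward direction, and apply ${}_\sigma^cD_0^{\alpha,\rho}$ and ${}_\sigma D^{k,\rho}$ term-by-term for the reverse direction. The only cosmetic differences are that the paper kills the polynomial part under ${}_\sigma^cD_0^{\alpha,\rho}$ by invoking Example~\ref{exp} rather than Definition~\ref{def5}, and reduces ${}_\sigma D^{k,\rho}J$ to ${}_\sigma I_0^{\alpha-k,\rho}f$ via the semigroup identity (Theorem~\ref{theorem3}) and the integer composition ${}_\sigma D^{k,\rho}{}_\sigma I^{k,\rho}=\mathrm{id}$ rather than through Lemmas~\ref{SFI}--\ref{Lem1} and~(P2); your added discussion of the regularity obstacle is a point the paper leaves implicit.
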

\begin{proof}
	Let $ \psi\in C[0,h]  $ be a solution of Volterra equation
	$$  \psi(t)=e^{-\sigma t^\rho}  \sum_{k=0}^{m-1}\frac{b_k  }{\Gamma(k+1)}t^{\rho k}+{}_{\sigma}I_{0}^{\alpha,\rho} f(t,\psi(t)). 	     $$
	Apply $	{}_\sigma^c{D}_{0}^{\alpha,\rho} $ to both sides of the above equation. Using Theorem \ref{theorem3} and Example \ref{exp}, we get
	\begin{align*}
	{}_\sigma^c{D}_{0}^{\alpha,\rho}\psi(t) &=  \sum_{k=0}^{m-1}\frac{b_k  }{\Gamma(k+1)}{}_\sigma^c{D}_{0}^{\alpha,\rho}e^{-\sigma t^\rho} t^{\rho k}+{}_\sigma^c{D}_{0}^{\alpha,\rho}{}_{\sigma}I_{0}^{\alpha,\rho} f(t,\psi(t)) \\&= f(t,\psi(t)).
	\end{align*}
	Now we apply $ {}_{\sigma}D_{}^{k,\rho} $ to both sides of Volterra equation, where $ 0 \leq k \leq m-1 $. Using Theorem \ref{theorem3}, Theorem \ref{theorem5} and Example \ref{exp}, we have
	\begin{align*}
	{}_{\sigma}D_{0}^{k,\rho}\psi(t) &=  \sum_{j=0}^{m-1}\frac{b_j  }{\Gamma(j+1)} {}_{\sigma}D_{}^{k,\rho} e^{-\sigma t^\rho}t^{\rho j}+ {}_{\sigma}D_{}^{k,\rho} {}_{\sigma}I_{0}^{\alpha,\rho} f(t,\psi(t))\\&=\sum_{j=0}^{m-1}\frac{b_j  }{\Gamma(j+1)} \Big(\frac{\Gamma(j+1)}{\Gamma(j-k+1)}    e^{-\sigma t^\rho}t^{\rho(j-k)} \Big)+ {}_{\sigma}D_{}^{k,\rho} {}_{\sigma}I_{}^{k,\rho} {}_{\sigma}I_{0}^{\alpha-k,\rho} f(t,\psi(t))\\&=   e^{-\sigma t^\rho} \sum_{j=0}^{m-1}\frac{b_j }{\Gamma(j-k+1)}  t^{\rho(j-k)}+ \frac{\rho}{\Gamma(\alpha-k)}\int_0^t\frac{
		e^{-\sigma(t^\rho-s^\rho)}}{(t^\rho-s^\rho)^{1-\alpha+k}} s^{\rho-1} f(s,\psi(s)) ds.
	\end{align*}
	Clearly for $ j<k $, the summands become identically zero because reciprocal of Gamma function for non-positive integers, vanishes. Furthermore, for $ k<j $, the summands vanish if $ t=0 $. Since $\alpha-k $ is a positive real number, so the integral also vanishes when $ t=0 $. Thus, we are left with the case $ j=k $.
	$$
	{}_{\sigma}D_{}^{k,\rho}\psi(0) = \frac{b_k }{\Gamma(k-k+1)}    e^{-\sigma t^\rho}t^{\rho(k-k)} \Big |_{t=0} =b_k.
	$$

	Conversely, assume that $ \psi \in C[0,h]  $ is the solution of the given IVP. Applying $ {}_{\sigma}I_{0}^{\alpha,\rho}  $ to both sides of the fractional differential equation  \eqref{de}, using the initial conditions $ \eqref{ic} $ and result of Theorem  \ref{caputoid} , we get Volterra equation.
\end{proof}

\begin{theorem}\label{uniquesolutionexisits} Assume that $ f $ satisfies $ (H1) $ and $ (H2) $. Then, Volterra equation $$  \psi(t)=e^{-\sigma t^\rho}  \sum_{k=0}^{m-1}\frac{b_k  }{\Gamma(k+1)}t^{\rho k}+\frac{\rho}{\Gamma(\alpha)}\int_0^t\frac{
		e^{-\sigma(t^\rho-s^\rho)}}{(t^\rho-s^\rho)^{1-\alpha}} s^{\rho-1} f(s,\psi(s)) ds     $$
	possesses a uniquely determined solution $ \psi \in [0,h] $.
\end{theorem}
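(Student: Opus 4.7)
The plan is to reformulate the Volterra equation as a fixed-point problem on a complete metric space and then invoke Weissinger's fixed point theorem, which is well-suited here because the Lipschitz constant of the integral operator on $C[0,h]$ depends polynomially on $h$ with exponent $\rho\alpha<1$, so a direct contraction mapping argument would give a pessimistic $h$; Weissinger lets us close the gap by summing the $n$-th iterate estimates against a Mittag-Leffler series.

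Concretely, set $\psi_{0}(t) := e^{-\sigma t^\rho}\sum_{k=0}^{m-1}\frac{b_k}{\Gamma(k+1)}t^{\rho k}$, let $U := \{\psi\in C[0,h] : \|\psi - \psi_{0}\|_{\infty}\leq K\}$, which is a closed (hence complete) subset of $(C[0,h],\|\cdot\|_{\infty})$, and define the Picard operator
\begin{equation*}
(T\psi)(t) := \psi_{0}(t) + \frac{\rho}{\Gamma(\alpha)}\int_{0}^{t}\frac{e^{-\sigma(t^\rho-s^\rho)}\,s^{\rho-1}}{(t^\rho-s^\rho)^{1-\alpha}}f(s,\psi(s))\,ds.
\end{equation*}
First I would verify that $T\psi\in C[0,h]$ via continuity of $f$ together with the standard absolute-continuity argument for weakly singular Volterra kernels. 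Next, using $(H1)$ and $\sigma>0$ (so that $e^{-\sigma(t^\rho-s^\rho)}\leq 1$), the substitution $u=s^\rho$ yields
\begin{equation*}
|T\psi(t)-\psi_{0}(t)|\leq \frac{\rho M}{\Gamma(\alpha)}\int_{0}^{t}\frac{s^{\rho-1}}{(t^\rho-s^\rho)^{1-\alpha}}ds = \frac{M\,t^{\rho\alpha}}{\Gamma(\alpha+1)}\leq \frac{M\,h^{\rho\alpha}}{\Gamma(\alpha+1)}\leq K,
\end{equation*}
by the definition of $h$, proving $T(U)\subseteq U$.

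The heart of the argument is the iterated Lipschitz estimate
\begin{equation*}
\|T^{n}\psi-T^{n}\tilde\psi\|_{\infty}\leq \frac{(L h^{\rho\alpha})^{n}}{\Gamma(n\alpha+1)}\,\|\psi-\tilde\psi\|_{\infty},\qquad n\geq 1,
\end{equation*}
which I would prove by induction. The base case $n=1$ follows from $(H2)$ together with the same kernel estimate used above. For the induction step, I would insert the hypothesis under the integral, drop the exponential factor (since $\sigma>0$), and evaluate
\begin{equation*}
\int_{0}^{t}\frac{s^{\rho-1}\,s^{\rho\alpha(n-1)}}{(t^\rho-s^\rho)^{1-\alpha}}\,ds = \frac{1}{\rho}\,t^{\rho\alpha n}\,B\bigl(\alpha(n-1)+1,\alpha\bigr),
\end{equation*}
and then collapse the Beta function using $B(\alpha(n-1)+1,\alpha)=\Gamma(\alpha(n-1)+1)\Gamma(\alpha)/\Gamma(n\alpha+1)$, which telescopes the Gamma factors neatly.

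Finally, because $\sum_{n=0}^{\infty}\frac{(Lh^{\rho\alpha})^{n}}{\Gamma(n\alpha+1)}=E_{\alpha}(Lh^{\rho\alpha})<\infty$ (indeed this is the classical Mittag-Leffler function), Weissinger's fixed-point theorem delivers a unique fixed point of $T$ in $U$, which is the sought solution; the condition $h\leq \tilde h<(\Gamma(\alpha+1)/L)^{1/(\rho\alpha)}$ is not actually required for the summability (Mittag-Leffler is entire), but it guarantees that the geometric-like ratio is less than one and matches the hypothesis already in force. The principal technical obstacle I anticipate is book-keeping the Gamma-function telescoping cleanly during the inductive step, and handling the weakly singular kernel so that $T\psi$ is truly continuous up to $t=0$; both are routine but need care to present without errors.
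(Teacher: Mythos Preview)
Your approach is essentially the same as the paper's: the same closed ball $U$ in $C[0,h]$, the same Picard operator, the same self-mapping estimate, and the same appeal to Weissinger's fixed point theorem. The one genuine difference is the iterate bound. The paper proves only the cruder geometric estimate
\[
\|E^{j}\psi_{1}-E^{j}\psi_{2}\|_{\infty}\le \Big(\tfrac{Lh^{\rho\alpha}}{\Gamma(\alpha+1)}\Big)^{j}\|\psi_{1}-\psi_{2}\|_{\infty},
\]
obtained by bounding the inner factor by its supremum at each step; summability then genuinely relies on the standing hypothesis $h\le\tilde h<(\Gamma(\alpha+1)/L)^{1/(\rho\alpha)}$. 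Your sharper bound $\tfrac{(Lh^{\rho\alpha})^{n}}{\Gamma(n\alpha+1)}$, coming from the Beta-function telescoping, sums to $E_{\alpha}(Lh^{\rho\alpha})$ and makes that restriction on $h$ superfluous, as you correctly observe. One small point of presentation: for the telescoping to go through you must carry the \emph{pointwise} induction hypothesis $|T^{n-1}\psi(s)-T^{n-1}\tilde\psi(s)|\le \tfrac{(Ls^{\rho\alpha})^{n-1}}{\Gamma((n-1)\alpha+1)}\|\psi-\tilde\psi\|_{\infty}$ (which your integrand with $s^{\rho\alpha(n-1)}$ shows you intend), not just the sup-norm version you displayed; state the induction that way and the argument is clean.
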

\begin{proof}
	Define a set 
	
	$$  
	B:=  \left\{  \psi \in C[0,h] : \sup_{0 \leq t \leq h}   \Big| \psi(t) - e^{-\sigma t^\rho}  \sum_{k=0}^{m-1}\frac{b_k  }{\Gamma(k+1)} t^{\rho k} \Big| \leq K   \right\}
	$$
	equipped with the norm $ ||.||_{B} $ 
	$$
	||\psi||_{B} := \sup_{0 < t \leq h} \Big|  \psi(t)  \Big|.
	$$
	It can be seen that $ (B,||.||_{B}) $ is a Banach space.
	Define the operator $ E $ by
	
	$$  E\psi(t):=e^{-\sigma t^\rho}  \sum_{k=0}^{m-1}\frac{b_k  }{\Gamma(k+1)}t^{\rho k}+\frac{\rho}{\Gamma(\alpha)}\int_0^t\frac{
		e^{-\sigma(t^\rho-s^\rho)}}{(t^\rho-s^\rho)^{1-\alpha}} s^{\rho-1} f(s,\psi(s)) ds.	     $$
	It is easy to check that $ E\psi $ is a continuous on interval $ [0,h] $ for $ \psi \in B $. Moreover,
	
	\begin{align*}
	\Big| E\psi(t)- e^{-\sigma t^\rho}  \sum_{k=0}^{m-1}\frac{b_k }{\Gamma(k+1)}  t^{\rho k} \Big|&=\Big|\frac{\rho  }{\Gamma(\alpha)}\int_0^t\frac{
		e^{-\sigma(t^\rho-s^\rho)}}{(t^\rho-s^\rho)^{1-\alpha}} s^{\rho-1} f(s,\psi(s)) ds  \Big| \\& \leq \frac{\rho }{\Gamma(\alpha)} M \int_0^t\frac{s^{\rho-1}
	}{(t^\rho-s^\rho)^{1-\alpha}}  ds \\& = \frac{\rho  }{\Gamma(\alpha)} M \frac{t^{\rho \alpha}}{\rho \alpha}=  \frac{ M }{\Gamma(\alpha+1)} t^{\rho \alpha}\leq K 
	\end{align*}
	for $ t \in [0,h] $, the last step follows from the definition of $ h $. This means that $ E\psi \in B $ for $ \psi \in B $, i.e. $ E $ is the self-map.

	From the definition of operator $ E $ and Volterra equation, it follows that fixed points of $ E $ are solutions of Volterra equation.

	We use Weissinger’s fixed point theorem to prove that the operator $ E $ has a unique fixed point. For $  \psi_1,\psi_2 \in B$, first we will show the following inequality
	
	$$  
	||E^{j}\psi_1-E^{j}\psi_2||_{B} \leq \Big( \frac{  L  h^{\rho \alpha}}{\Gamma(\alpha+1)} \Big)^{j} ||\psi_1-\psi_2||_{B}.
	$$
	Clearly, the above inequality is true for the case $ j=0 $. Assume that it is true for $ j=k-1 $. For $ j=k  $, we have
	\begin{align*}
	\allowdisplaybreaks 
	||E^{k}\psi_1-E^{k}\psi_2||_{{B}} &=\sup_{0 \leq t \leq h} \Big|   E^{k}\psi_1(t)-E^{k}\psi_2(t)   \Big|\\&=  \sup_{0 \leq t \leq h} \Big| E E^{k-1}\psi_1(t)-EE^{k-1}\psi_2(t)   \Big|\\&= \sup_{0 \leq t \leq h} \frac{1}{\Gamma(\alpha)}\Big| \rho \int_0^t\frac{
		e^{-\sigma(t^\rho-s^\rho)}}{(t^\rho-s^\rho)^{1-\alpha}} s^{\rho-1} \Big[f(s,E^{k-1}\psi_1(s))-f(s,E^{k-1}\psi_2(s)) \Big] ds \Big|	\\& \leq  \sup_{0 \leq t \leq h}  \frac{  L }{\Gamma(\alpha)}  	\left\{ \rho \int_0^t\frac{
		1}{(t^\rho-s^\rho)^{1-\alpha}} s^{\rho-1}ds \right\}  
	||E^{k-1}\psi_1-E^{k-1}\psi_2||_{{B}} 	\\& =  \frac{  L }{\Gamma(\alpha)}  	\left\{ \frac{
		h^{\rho \alpha}}{\alpha}  \right\}  
	||E^{k-1}\psi_1-E^{k-1}\psi_2||_{{B}}  = \Big( \frac{  L  h^{\rho \alpha}}{\Gamma(\alpha+1)} \Big)||E^{k-1}\psi_1-E^{k-1}\psi_2||_{{B}} \\&=\Big( \frac{  L  h^{\rho \alpha}}{\Gamma(\alpha+1)} \Big)^{k}||\psi_1-\psi_2||_{{B}}.
	\end{align*}
	Since $ h \leq \tilde{h} $, we have $ \Big( \frac{  L  h^{\rho \alpha}}{\Gamma(\alpha+1)} \Big) < 1. $ Thus, the series $ \sum_{j=0}^{\infty} \Big( \frac{  L  h^{\rho \alpha}}{\Gamma(\alpha+1)} \Big)^{j} $ is convergent.
	This completes the proof.
\end{proof}

Following is an example for which a general method to determine the analytical solution is not available, but Theorem \ref{equivalence} and Theorem \ref{uniquesolutionexisits} allows us to comment on the existence of its unique solution.
\begin{example}
	Consider the IVP
	\begin{align}
	{}_1^c{D}_{0}^{0.5,2}\psi(t)&=t e^{-t^2}\frac{(\psi(t))^2}{1+(\psi(t))^2}, \label{existenceexp}\\
	\psi(0)&=b_0. \label{existenceic} 
	\end{align}
	It can easily be verified that $ f(t,\psi(t))= t e^{-t^2}\frac{(\psi(t))^2}{1+(\psi(t))^2}$ is both, continuous and bounded in $ H $. Furthermore, we show that $ f $ satisfies the Lipschitz condition
	\begin{align*}
	|f(t,\psi(t))-f(t,\tilde{\psi}(t))|&=\left| t e^{-t^2}\frac{(\psi(t))^2}{1+(\psi(t))^2}-t e^{-t^2}\frac{(\tilde{\psi}(t))^2}{1+(\tilde{\psi}(t))^2}\right|\\&=\left|t e^{-t^2}\right|\left| \frac{(\tilde{\psi}(t))^2-(\psi(t))^2}{(1+(\psi(t))^2)(1+(\tilde{\psi}(t))^2)}\right|.
	\end{align*}
	Since $ 1+(\psi(t))^2 \geq 1 $ and $ 1+(\tilde{\psi}(t))^2 \geq 1 $, so
	\begin{align*}
	|f(t,\psi(t))-f(t,\tilde{\psi}(t))| & \leq \Big\{ \sup_{0 \leq t \leq h} \Big| t e^{-t^2} (\tilde{\psi}(t)+\psi(t)) \Big| \Big\} |\tilde{\psi}(t)-\psi(t)| \\& \leq h \Big\{  \Big|\sup_{0 \leq t \leq h}|\tilde{\psi}(t)|+\sup_{0 \leq t \leq h}|\psi(t) | \Big\} |\tilde{\psi}(t)-\psi(t)| \\&= h(K_1+K_2)|\psi(t)-\tilde{\psi}(t)|,
	\end{align*}
	where $ L:= h(K_1+K_2)$ is the Lipschitz constant. Thus, hypothesis $ (H1) $ and $ (H2) $ hold. From Theorem \ref{equivalence} and Theorem \ref{uniquesolutionexisits}, we can deduce that there exists a unique solution of IVP \eqref{existenceexp}-\eqref{existenceic}.
\end{example}

\section{Continuous dependence of solutions on the given data}

In this section, first we prove a Gronwall-type inequality which is the generalized version of Gronwall-type inequalities presented in \cite{Ye,Gong,R.}. Undoubtedly, this inequality plays an important role in the qualitative theory of integral and differential equations. Furthermore, we analyze the continuous dependence of solution of a fractional differential equation on the given data.
\begin{theorem} \label{gronwall}
	Assume that $ p $ and $ q $ are non-negative integrable functions and $ g $ is non-negative and non-decreasing continuous function on $ [a,b] $.
	
	If 
	$$
	p(t) \leq q(t) + {\rho}^{1-\alpha} g(t) \int_a^t\frac{s^{\rho-1}
		e^{-\sigma(t^\rho-s^\rho)}}{(t^\rho-s^\rho)^{1-\alpha}}p(s) ds, \ \ \forall t \in [a,b],
	$$
	
	then
	$$
	p(t)  \leq q(t) +  \int_a^t \sum_{k=1}^{\infty} \frac{ {\rho}^{1-k\alpha}[g(t)\Gamma(\alpha)]^{k}}{\Gamma(k\alpha)} 	e^{-\sigma(t^\rho-s^\rho)}(t^\rho-s^\rho)^{k\alpha - 1} s^{\rho-1} q(s) ds, \ \ \forall \ t \in [a,b].
	$$
	
	Moreover, if $ q $ is non-decreasing, then 
	
	$$
	p(t)  \leq q(t) E_\alpha \Big[ g(t) \Gamma(\alpha) \Big(\frac{(t^\rho-a^\rho)}{\rho}\Big)^{\alpha} \Big], \ \ \forall \ t \in [a,b].
	$$
\end{theorem}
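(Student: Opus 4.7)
My plan is to follow the classical iterative proof of Gronwall-type inequalities, adapted to the weighted kernel appearing here. Define the linear operator
\begin{equation*}
Tu(t) := \rho^{1-\alpha}g(t)\int_a^t \frac{s^{\rho-1}e^{-\sigma(t^\rho-s^\rho)}}{(t^\rho-s^\rho)^{1-\alpha}}u(s)\,ds,
\end{equation*}
so that the hypothesis reads $p(t)\leq q(t)+Tp(t)$. Iterating $n$ times gives
\begin{equation*}
p(t)\leq \sum_{k=0}^{n-1}T^k q(t)+T^n p(t),
\end{equation*}
provided $T$ is monotone on non-negative functions, which is clear since all factors in the kernel are non-negative. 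The proof then reduces to (i) identifying $T^k q(t)$ with the $k$-th term of the series, and (ii) showing $T^n p(t)\to 0$ as $n\to\infty$.

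For step (i), I will prove by induction on $k\geq 1$ the bound
\begin{equation*}
T^k q(t)\leq \int_a^t \frac{\rho^{1-k\alpha}[g(t)\Gamma(\alpha)]^k}{\Gamma(k\alpha)}e^{-\sigma(t^\rho-s^\rho)}(t^\rho-s^\rho)^{k\alpha-1}s^{\rho-1}q(s)\,ds.
\end{equation*}
The base case $k=1$ is immediate. For the induction step, I substitute the induction hypothesis into $Tu$, use the monotonicity of $g$ (so that $g(s)\leq g(t)$ for $s\leq t$, which lets the $g(t)^{k-1}$ factor inside the inner integral be pulled out), combine the two exponentials via $e^{-\sigma(t^\rho-s^\rho)}e^{-\sigma(s^\rho-\tau^\rho)}=e^{-\sigma(t^\rho-\tau^\rho)}$, and interchange the order of integration by Fubini. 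The inner integral over $s$ reduces, under the substitution $u=(s^\rho-\tau^\rho)/(t^\rho-\tau^\rho)$ with $du=\rho s^{\rho-1}/(t^\rho-\tau^\rho)\,ds$, to a Beta integral
\begin{equation*}
\int_\tau^t \frac{s^{\rho-1}(s^\rho-\tau^\rho)^{(k-1)\alpha-1}}{(t^\rho-s^\rho)^{1-\alpha}}\,ds=\frac{(t^\rho-\tau^\rho)^{k\alpha-1}}{\rho}\,B(\alpha,(k-1)\alpha),
\end{equation*}
and $B(\alpha,(k-1)\alpha)=\Gamma(\alpha)\Gamma((k-1)\alpha)/\Gamma(k\alpha)$ collapses the constants into exactly the required form. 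This Beta-function computation is the technical heart of the argument.

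For step (ii), the same induction bound applied with $q$ replaced by $p$ gives
\begin{equation*}
T^n p(t)\leq \frac{\rho^{1-n\alpha}[g(b)\Gamma(\alpha)]^n}{\Gamma(n\alpha)}(b^\rho-a^\rho)^{(n\alpha-1)_+}\int_a^b s^{\rho-1}p(s)\,ds,
\end{equation*}
using $e^{-\sigma(t^\rho-s^\rho)}\leq 1$ (valid since $\sigma>0$) and the bound $(t^\rho-s^\rho)^{n\alpha-1}\leq(b^\rho-a^\rho)^{n\alpha-1}$ once $n\alpha\geq 1$. Since $\Gamma(n\alpha)$ dominates any geometric factor, $T^n p(t)\to 0$ uniformly, establishing the first conclusion by passing to the limit in the iterated inequality.

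For the Mittag-Leffler bound when $q$ is non-decreasing, I use $q(s)\leq q(t)$ and $e^{-\sigma(t^\rho-s^\rho)}\leq 1$ inside the series, then evaluate
\begin{equation*}
\int_a^t (t^\rho-s^\rho)^{k\alpha-1}s^{\rho-1}\,ds=\frac{(t^\rho-a^\rho)^{k\alpha}}{\rho k\alpha},
\end{equation*}
so that the constants rearrange into
\begin{equation*}
q(t)\sum_{k=0}^\infty \frac{\bigl[g(t)\Gamma(\alpha)((t^\rho-a^\rho)/\rho)^\alpha\bigr]^k}{\Gamma(k\alpha+1)}=q(t)E_\alpha\!\left[g(t)\Gamma(\alpha)\Bigl(\frac{t^\rho-a^\rho}{\rho}\Bigr)^\alpha\right],
\end{equation*}
which is the stated closed form. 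I expect the main obstacle to be executing the Beta-integral substitution cleanly, because the interplay between the $\rho$-weighted measure $s^{\rho-1}ds$, the exponential factor, and the two power-law kernels must align so that the constants telescope exactly into $\rho^{1-k\alpha}[\Gamma(\alpha)]^k/\Gamma(k\alpha)$; once that identity is verified, the rest is bookkeeping.
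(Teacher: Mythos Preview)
Your proposal is correct and follows essentially the same route as the paper's own proof: define the integral operator, iterate the hypothesis, prove the $k$-th power bound by induction via Fubini and the Beta integral (what the paper calls ``Dirichlet's technique''), then show the remainder $T^n p(t)\to 0$ by the growth of $\Gamma(n\alpha)$, and finally collapse the series to $E_\alpha$ when $q$ is non-decreasing. The only cosmetic differences are that you make the Beta substitution $u=(s^\rho-\tau^\rho)/(t^\rho-\tau^\rho)$ explicit whereas the paper just cites the resulting identity, and you bound $g$ by $g(b)$ directly while the paper invokes the Weierstrass extreme value theorem to get a bound $M$; these are the same argument.
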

\begin{proof}
	Define operator $ A $ as 
	
	$$
	A \psi(t):= {\rho}^{1-\alpha} g(t) \int_a^t\frac{s^{\rho-1}
		e^{-\sigma(t^\rho-s^\rho)}}{(t^\rho-s^\rho)^{1-\alpha}}\psi(s) ds.
	$$ Then,
	
	$$
	p(t)  \leq q(t)+  A p(t).
	$$
	Iterating successively, for $  n \in \N$, we obtain
	$$
	p(t)  \leq \sum_{k=0}^{n-1} A^{k} q(t)+   A^{n} p(t).
	$$
	By mathematical induction, we show that if $ \psi $ is non-negative, then, 
	
	$$
	A^{k} \psi(t) \leq {\rho}^{1-k\alpha} \int_a^t  \frac{ [g(t)\Gamma(\alpha)]^{k}}{\Gamma(k\alpha)} 	e^{-\sigma(t^\rho-s^\rho)}(t^\rho-s^\rho)^{k\alpha - 1} s^{\rho-1} \psi(s) ds. 
	$$
	For $ k=1 $, the equality holds. Assume that it is true for $ k \in \N $. Then,  
	\begin{align*}
	A^{k+1} \psi(t)&= A(A^{k} \psi(t)) 	\\& \leq  A \Big( {\rho}^{1-k\alpha} \int_a^\tau  \frac{ [g(\tau)\Gamma(\alpha)]^{k}}{\Gamma(k\alpha)} 	e^{-\sigma(\tau^\rho-s^\rho)}(\tau^\rho-s^\rho)^{k\alpha - 1} s^{\rho-1} \psi(s) ds \Big) \\&=  {\rho}^{1-\alpha} g(t) \int_a^t\frac{\tau^{\rho-1}
		e^{-\sigma(t^\rho-\tau^\rho)}}{(t^\rho-\tau^\rho)^{1-\alpha}} \Big( {\rho}^{1-k\alpha} \int_a^\tau  \frac{ [g(\tau)\Gamma(\alpha)]^{k}}{\Gamma(k\alpha)} 	e^{-\sigma(\tau^\rho-s^\rho)}(\tau^\rho-s^\rho)^{k\alpha - 1} s^{\rho-1} \psi(s) ds \Big) d\tau.
	\end{align*}
	By assumption, $ g $ is non-decreasing, so $  g (\tau) \leq g(t) $, $ \forall\ \tau \leq t $. Thus, we have
	$$
	A^{k+1} \psi(t) \leq \frac{ (\Gamma(\alpha))^{k}}{\Gamma(k\alpha)}  {\rho}^{2-(k+1)\alpha} (g(t))^{k+1} \int_a^t \int_a^\tau  	e^{-\sigma(t^\rho-s^\rho)} 
	(t^\rho-\tau^\rho)^{\alpha-1} \tau^{\rho-1}	(\tau^\rho-s^\rho)^{k\alpha - 1} s^{\rho-1} \psi(s) ds  d\tau.
	$$
	Using Fubini's Theorem and Dirichlet's technique, we get
	\begin{align*}
	A^{k+1} \psi(t) & \leq \frac{ (\Gamma(\alpha))^{k}}{\Gamma(k\alpha)}  {\rho}^{2-(k+1)\alpha} (g(t))^{k+1} \int_a^t e^{-\sigma(t^\rho-s^\rho)} s^{\rho-1} \psi(s) \int_s^t   
	(t^\rho-\tau^\rho)^{\alpha-1} \tau^{\rho-1}	(\tau^\rho-s^\rho)^{k\alpha - 1}  d\tau  ds \\&= \frac{ (\Gamma(\alpha))^{k}}{\Gamma(k\alpha)}  {\rho}^{2-(k+1)\alpha} (g(t))^{k+1} \int_a^t e^{-\sigma(t^\rho-s^\rho)} s^{\rho-1} \psi(s) \Big( \frac{\Gamma(\alpha)\Gamma(k\alpha)}{\rho \Gamma(k\alpha+\alpha)}(t^\rho-s^\rho)^{(k+1)\alpha-1} 
	\Big ) ds \\&={\rho}^{1-(k+1)\alpha} \int_a^t  \frac{ [g(t)\Gamma(\alpha)]^{(k+1)}}{\Gamma((k+1)\alpha)} 	e^{-\sigma(t^\rho-s^\rho)}(t^\rho-s^\rho)^{(k+1)\alpha - 1} s^{\rho-1} \psi(s) ds.
	\end{align*}
	Now we prove that $ A^{n}p(t) \to 0 $ as $n \to \infty$. Since $ g $ is continuous  on $ [a, b] $, so by Weierstrass theorem, $ \exists $ a constant $ M > 0 $ such that $ g (t) \leq M, \ \ \forall \ t \in [a,b]. $
	$$
	\implies A^{n}p(t) \leq {\rho}^{1-n\alpha} \int_a^t  \frac{ [M\Gamma(\alpha)]^{n}}{\Gamma(n\alpha)} 	e^{-\sigma(t^\rho-s^\rho)}(t^\rho-s^\rho)^{n\alpha - 1} s^{\rho-1} p(s) ds.
	$$
	Consider the series 
	$$
	\sum_{n=1}^{\infty}  \frac{ [M\Gamma(\alpha)]^{n}}{\Gamma(n\alpha)}.
	$$
	Using the relation $$ \lim_{n \to \infty} \frac{\Gamma(n \alpha)(n \alpha)^{\alpha}}{\Gamma(n \alpha +\alpha)} = 1, $$ and ratio test, we deduce that the series converges and therefore $ A^{n}p(t) \to 0 $ as $n \to \infty$. Thus,
	\begin{align*}
	p(t) & \leq \sum_{k=0}^{\infty} A^{k} q(t) \\ & \leq  q(t) +  \int_a^t \sum_{k=1}^{\infty} \frac{ {\rho}^{1-k\alpha}[g(t)\Gamma(\alpha)]^{k}}{\Gamma(k\alpha)} 	e^{-\sigma(t^\rho-s^\rho)}(t^\rho-s^\rho)^{k\alpha - 1} s^{\rho-1} q(s) ds.
	\end{align*}
	Additionally, if $ q $ is non-decreasing, then, $ q(s) \leq q(t) $, $ \forall \ s \in [a,t]  $. So,
	\begin{align*}
	p(t) & \leq  q(t) \Big[1 +  \sum_{k=1}^{\infty} \frac{ {\rho}^{1-k\alpha}[g(t)\Gamma(\alpha)]^{k}}{\Gamma(k\alpha)}   \int_a^t	e^{-\sigma(t^\rho-s^\rho)}(t^\rho-s^\rho)^{k\alpha - 1} s^{\rho-1}  ds \Big] \\& \leq  q(t) \Big[1 +  \sum_{k=1}^{\infty} \frac{ {\rho}^{1-k\alpha}[g(t)\Gamma(\alpha)]^{k}}{\Gamma(k\alpha)}   \int_a^t	(t^\rho-s^\rho)^{k\alpha - 1} s^{\rho-1}  ds \Big] \\&=  q(t) \Big[1 +  \sum_{k=1}^{\infty} \frac{ {\rho}^{-k\alpha}[g(t)\Gamma(\alpha)(t^\rho-a^\rho)^{\alpha }]^{k}}{\Gamma(k\alpha+1)}  \Big]  \\&=   q(t) E_\alpha \Big[ g(t) \Gamma(\alpha) \Big(\frac{(t^\rho-a^\rho)}{\rho}\Big)^{\alpha} \Big].
	\end{align*}
	
\end{proof}

Next we look at the dependence of solution of a fractional differential equation on the initial values.
\begin{theorem}\label{dependenceonic}
	Assume that $ \psi $ is the solution of the IVP $ \eqref{de}-\eqref{ic}  $ and $ \phi $ is the solution of the following IVP
	\begin{align}
	{}_\sigma^c{D}_{0}^{\alpha,\rho}\phi(t)&=f(t,\phi(t)),\ \ \ t>0, \label{zde}  \\
	{}_{\sigma}D_{}^{k,\rho}\phi(0)&=c_k,     \ \ k \in \left\{0,1,2,...,m-1 \right\}. \label{zic}
	\end{align}
	Let $ \epsilon := \max_{k=0,1,...,m-1} |b_k-c_k| $. If $ \epsilon $ is sufficiently small, then $ \exists$ some constant $ h>0 $ such that $ \psi $ and $ \phi $ are defined on $ [0,h] $, and 
	$$
	\sup_{0 \leq t \leq h} |\psi(t)-\phi(t)|=\mathcal{O} (\epsilon).
	$$
\end{theorem}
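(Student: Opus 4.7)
The plan is to reduce both initial value problems to Volterra integral equations via Theorem \ref{equivalence}, subtract them to obtain an integral inequality for $|\psi(t)-\phi(t)|$, and then invoke the Gronwall-type inequality of Theorem \ref{gronwall} in the non-decreasing variant to produce the desired $\mathcal{O}(\epsilon)$ bound.

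Before doing this, I would first arrange that both $\psi$ and $\phi$ are defined on a common interval $[0,h]$. Since the set $H$ from Theorem \ref{uniquesolutionexisits} is defined via the polynomial part $e^{-\sigma t^\rho}\sum_{k=0}^{m-1} b_k t^{\rho k}/\Gamma(k+1)$, and the analogous set for $\phi$ uses $c_k$, I would fix any $K>0$ appearing in (H1) and, by choosing $\epsilon$ small enough that $\epsilon\sum_{k=0}^{m-1}h^{*\rho k}/\Gamma(k+1)<K/2$, replace both tubes by a common tube of half-radius around a fixed polynomial. On this common tube (H1) and (H2) still hold with slightly enlarged constants, so the existence theorem yields a common $h>0$ on which both solutions live.

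Next, by Theorem \ref{equivalence}, subtracting the two Volterra equations gives
\begin{align*}
\psi(t)-\phi(t) &= e^{-\sigma t^\rho}\sum_{k=0}^{m-1}\frac{b_k-c_k}{\Gamma(k+1)}t^{\rho k}\\
&\quad + \frac{\rho}{\Gamma(\alpha)}\int_0^t \frac{s^{\rho-1}e^{-\sigma(t^\rho-s^\rho)}}{(t^\rho-s^\rho)^{1-\alpha}}\bigl[f(s,\psi(s))-f(s,\phi(s))\bigr]ds.
\end{align*}
Using $0\leq e^{-\sigma t^\rho}\leq 1$, the triangle inequality, and the Lipschitz condition (H2), I would set $p(t):=|\psi(t)-\phi(t)|$, $q(t):=\epsilon\sum_{k=0}^{m-1}t^{\rho k}/\Gamma(k+1)$, and take $g(t):=L\rho^\alpha/\Gamma(\alpha)$ (a non-negative constant, hence trivially non-decreasing and continuous). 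This yields exactly the hypothesis of Theorem \ref{gronwall}:
\begin{equation*}
p(t)\leq q(t)+\rho^{1-\alpha}g(t)\int_0^t\frac{s^{\rho-1}e^{-\sigma(t^\rho-s^\rho)}}{(t^\rho-s^\rho)^{1-\alpha}}p(s)\,ds.
\end{equation*}

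Since $q$ is non-decreasing on $[0,h]$, the non-decreasing variant of Theorem \ref{gronwall} gives $p(t)\leq q(t)E_\alpha[g(t)\Gamma(\alpha)(t^\rho/\rho)^\alpha]=q(t)E_\alpha[Lt^{\rho\alpha}]$. Taking the supremum on $[0,h]$ bounds $p$ by $\epsilon\bigl(\sum_{k=0}^{m-1}h^{\rho k}/\Gamma(k+1)\bigr)E_\alpha[Lh^{\rho\alpha}]$, a constant multiple of $\epsilon$, which is the claimed $\mathcal{O}(\epsilon)$ estimate. The main obstacle is the bookkeeping in the first step, namely producing a single interval $[0,h]$ on which both solutions simultaneously exist, because a priori the existence intervals from Theorem \ref{uniquesolutionexisits} depend on the data $b_k$ and $c_k$; once this is in place, the rest is a direct application of Theorems \ref{equivalence} and \ref{gronwall}.
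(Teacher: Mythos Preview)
Your proposal is correct and follows essentially the same route as the paper: convert both IVPs to Volterra equations via Theorem \ref{equivalence}, subtract, bound the initial-data term by $\epsilon$ times a polynomial in $t^\rho$, apply the Lipschitz condition (H2), and then invoke the non-decreasing case of Theorem \ref{gronwall} with $g(t)=\rho^\alpha L/\Gamma(\alpha)$. The only cosmetic differences are that the paper takes $q$ to be the constant $\epsilon\sum_{k=0}^{m-1}h^{\rho k}/\Gamma(k+1)$ (actually with a superfluous factor $m$) rather than your time-dependent $q(t)$, and it handles the common interval simply by setting $h=\min\{h_1,h_2\}$ without your more detailed tube argument.
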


\begin{proof}
	
	Let $ \psi $ and $ \phi $ be defined on $ [0,h_1] $ and $ [0,h_2] $, respectively. Take $ h=\min \left\{h_1,h_2\right\} $, then both the functions $ \psi $ and $ \phi $, are at-least defined on interval $[0,h] $. Define $ \delta(t):= \psi(t)-\phi(t) $, then $ \delta $ is the solution of the following IVP
	\begin{align}
	{}_\sigma^c{D}_{0}^{\alpha,\rho}\delta(t)&=f(t,\psi(t))-f(t,\phi(t)),\ \ \ t>0, \label{diffeq}  \\
	{}_{\sigma}D_{}^{k,\rho}\delta(0)&=b_k-c_k,     \ \ k \in \left\{0,1,2,...,m-1 \right\}.  \label{inicon} 
	\end{align}
	The IVP $ \eqref{diffeq}-\eqref{inicon} $ is equivalent to Volterra equation 
	$$  \delta(t)=e^{-\sigma t^\rho}  \sum_{k=0}^{m-1}\frac{(b_k-c_k)  }{\Gamma(k+1)}t^{\rho k}+\frac{\rho}{\Gamma(\alpha)}\int_0^t\frac{
		e^{-\sigma(t^\rho-s^\rho)}}{(t^\rho-s^\rho)^{1-\alpha}} s^{\rho-1} \Big(f(s,\psi(s))-f(s,\phi(s))\Big) ds.	     $$
	Taking absolute of above equation and using triangle inequality and Lipschitz condition on $ f $, we get
	\begin{align*}
	|\delta(t)|&=\Big| e^{-\sigma t^\rho}  \sum_{k=0}^{m-1}\frac{(b_k-c_k)  }{\Gamma(k+1)}t^{\rho k}+\frac{\rho}{\Gamma(\alpha)}\int_0^t\frac{
		e^{-\sigma(t^\rho-s^\rho)}}{(t^\rho-s^\rho)^{1-\alpha}} s^{\rho-1} \Big(f(s,\psi(s))-f(s,\phi(s))\Big) ds\Big| \\& \leq \Big| e^{-\sigma t^\rho} \Big|  \sum_{k=0}^{m-1}\frac{ t^{\rho k} }{\Gamma(k+1)}\Big|b_k-c_k\Big|+\frac{\rho}{\Gamma(\alpha)}\int_0^t\frac{
		e^{-\sigma(t^\rho-s^\rho)}}{(t^\rho-s^\rho)^{1-\alpha}} s^{\rho-1} \Big|f(s,\psi(s))-f(s,\phi(s))\Big| ds  \\& \leq   \sum_{k=0}^{m-1}\frac{ h^{\rho k} }{\Gamma(k+1)}\max_{k=0,1,...,m-1} \Big|b_k-c_k\Big|+\frac{\rho L}{\Gamma(\alpha)}\int_0^t\frac{
		e^{-\sigma(t^\rho-s^\rho)}}{(t^\rho-s^\rho)^{1-\alpha}} s^{\rho-1} \Big|\psi(s)-\phi(s)\Big| ds \\&= m\epsilon   \sum_{k=0}^{m-1}\frac{ h^{\rho k} }{\Gamma(k+1)} +\frac{\rho L}{\Gamma(\alpha)}\int_0^t\frac{
		e^{-\sigma(t^\rho-s^\rho)}}{(t^\rho-s^\rho)^{1-\alpha}} s^{\rho-1} |\delta(s)| ds.
	\end{align*}
	Taking $ p(t)=|\delta(t)| $, $ q(t)= m\epsilon   \sum_{k=0}^{m-1}\frac{ h^{\rho k} }{\Gamma(k+1)} $ and $ g(t)=\frac{\rho^{\alpha} L}{\Gamma(\alpha)} $, and using Theorem $ \ref{gronwall} $, we find
	$$
	|\delta(t)| \leq m\epsilon  \sum_{k=0}^{m-1}\frac{ h^{\rho k} }{\Gamma(k+1)} E_{\alpha} ( L t^{\rho \alpha} ) \leq m\epsilon  \sum_{k=0}^{m-1}\frac{ h^{\rho k} }{\Gamma(k+1)} E_{\alpha} ( L h^{\rho \alpha} ) = \mathcal{O} (\epsilon),
	$$
	and this completes the proof.

	Now we discuss an example to verify the statement of Theorem \ref{dependenceonic}.

	\begin{example} \label{stabilityexp}
		The unique analytical solutions of the following four IVPs
		$$
		{}_1^c{D}_{0}^{0.5,0.5}\psi_{i}(t)=0.9\psi_{i}(t), \ \
		\psi_{1}(0)=1,     \ \ \psi_{2}(0)=1.2,     \ \ \psi_{3}(0)=1.4,     \ \ \psi_{4}(0)=1.6,
		$$
		are given by
		$$
		\psi_{i}(t)=\psi_{i}(0) e^{-t^{0.5}} E_{0.5}(0.9t^{0.25}), \ \ 0 \leq t \leq h.
		$$ 
		Plots of these solutions are given in Fig. \ref{Fig2}.
		\begin{figure}[h!]
			\centering
			{\includegraphics[width=0.60\textwidth]{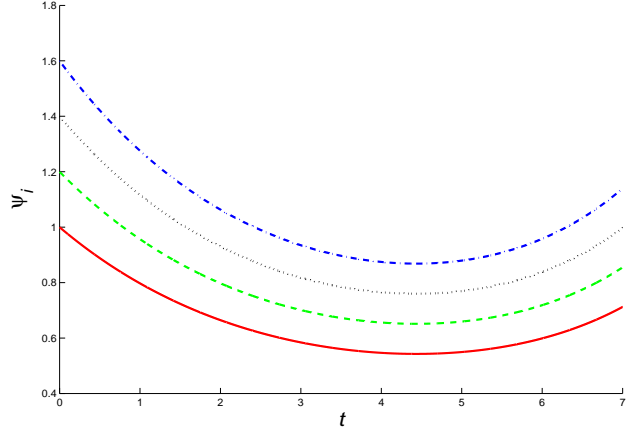}}
			\caption{Graphs of solutions from Example \ref{stabilityexp}.}
			\label{Fig2}
		\end{figure}
		From the Fig. \ref{Fig2}, we can see that change in solutions is bounded by the change in initial conditions on the closed interval $ [0,h]. $ Thus, Example \ref{stabilityexp} verifies the statement of Theorem \ref{dependenceonic}.
	\end{example}	
	
\end{proof}

In the next theorem, we analyze the dependence of solution of the fractional differential equation on the force function $ f $. 
\begin{theorem}
	Assume that $ \psi $ is the solution of the IVP $ \eqref{de}-\eqref{ic}  $ and $ \phi $ is the solution of the following IVP
	\begin{align}
	{}_\sigma^c{D}_{0}^{\alpha,\rho}\phi(t)&=\tilde{f}(t,\phi(t)),\ \ \ t>0, \label{ftildede}  \\
	{}_{\sigma}D_{}^{k,\rho}\phi(0)&=b_k,     \ \ k \in \left\{0,1,2,...,m-1 \right\}, \label{ftildeic}
	\end{align}
	where $ \tilde{f} $ satisfies the same conditions as $f$. Let $ \epsilon := \max_{(t,\phi(t)) \in H} |f(t,\phi(t))-\tilde{f}(t,\phi(t))| $. If $ \epsilon $ is sufficiently small, then $ \exists$ some constant $ h>0 $ such that $ \psi $ and $ \phi $ are defined on $ [0,h] $, and 
	$$
	\sup_{0 \leq t \leq h} |\psi(t)-\phi(t)|=\mathcal{O} (\epsilon).
	$$
\end{theorem}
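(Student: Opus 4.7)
The plan is to adapt the argument of Theorem \ref{dependenceonic}, except now the perturbation lives in the force function rather than in the initial data. Choose $h_1, h_2$ as the existence intervals granted by Theorem \ref{uniquesolutionexisits} for $\psi$ and $\phi$ respectively, and set $h := \min\{h_1,h_2\}$ so that both solutions are defined on $[0,h]$. For $\epsilon$ small enough, $\phi$ remains inside the set $H$ built around $\psi$, so the Lipschitz constant $L$ of (H2) applies to both.

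First I would invoke Theorem \ref{equivalence} to convert both IVPs into Volterra equations. Setting $\delta(t):=\psi(t)-\phi(t)$, the shared initial conditions make the polynomial-exponential terms cancel upon subtraction, leaving
$$
\delta(t)=\frac{\rho}{\Gamma(\alpha)}\int_0^t\frac{e^{-\sigma(t^\rho-s^\rho)}}{(t^\rho-s^\rho)^{1-\alpha}}s^{\rho-1}\bigl[f(s,\psi(s))-\tilde f(s,\phi(s))\bigr]\,ds.
$$

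Next, I would decompose the integrand as
$$
f(s,\psi(s))-\tilde f(s,\phi(s))=\bigl[f(s,\psi(s))-f(s,\phi(s))\bigr]+\bigl[f(s,\phi(s))-\tilde f(s,\phi(s))\bigr].
$$
Bounding the first bracket by $L|\delta(s)|$ via (H2) and the second by $\epsilon$ via the definition of $\epsilon$, then taking absolute values (using $e^{-\sigma(t^\rho-s^\rho)}\le 1$ on the $\epsilon$-piece), I obtain
$$
|\delta(t)|\leq\frac{\rho\epsilon}{\Gamma(\alpha)}\int_0^t\frac{s^{\rho-1}}{(t^\rho-s^\rho)^{1-\alpha}}\,ds+\frac{\rho L}{\Gamma(\alpha)}\int_0^t\frac{e^{-\sigma(t^\rho-s^\rho)}}{(t^\rho-s^\rho)^{1-\alpha}}s^{\rho-1}|\delta(s)|\,ds.
$$
The first integral computes to $t^{\rho\alpha}/(\rho\alpha)$, yielding the non-decreasing majorant $q(t)=\epsilon t^{\rho\alpha}/\Gamma(\alpha+1)$.

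Finally, I would apply the generalized Gronwall inequality of Theorem \ref{gronwall} with $p(t)=|\delta(t)|$, $q(t)=\epsilon t^{\rho\alpha}/\Gamma(\alpha+1)$ (non-decreasing), and the constant choice $g(t)=\rho^{\alpha}L/\Gamma(\alpha)$, obtaining
$$
|\delta(t)|\leq\frac{\epsilon\,t^{\rho\alpha}}{\Gamma(\alpha+1)}E_{\alpha}\bigl(L t^{\rho\alpha}\bigr)\leq\frac{\epsilon\,h^{\rho\alpha}}{\Gamma(\alpha+1)}E_{\alpha}\bigl(L h^{\rho\alpha}\bigr)=\mathcal{O}(\epsilon).
$$
The only real subtlety (and thus the main obstacle) is verifying that $\epsilon$ small enough keeps the trajectory of $\phi$ within the common domain of definition where the Lipschitz bound is uniform; the rest is a direct parallel to the proof of Theorem \ref{dependenceonic}.
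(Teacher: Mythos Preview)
Your proposal is correct and follows essentially the same approach as the paper: subtract the two Volterra representations, split $f(s,\psi(s))-\tilde f(s,\phi(s))$ into a Lipschitz piece and an $\epsilon$-piece, and apply Theorem~\ref{gronwall} with $g(t)=\rho^{\alpha}L/\Gamma(\alpha)$. The only cosmetic difference is that the paper replaces $t^{\rho\alpha}$ by $h^{\rho\alpha}$ \emph{before} invoking Gronwall (so $q$ is constant), whereas you keep $q(t)=\epsilon t^{\rho\alpha}/\Gamma(\alpha+1)$ and bound at the end; both yield the same $\mathcal{O}(\epsilon)$ conclusion.
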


\begin{proof}
	
	Let $ \psi $ and $ \phi $ be defined on $ [0,h_1] $ and $ [0,h_2] $, respectively. Take $ h=\min \left\{h_1,h_2\right\} $, then both the functions $ \psi $ and $ \phi $, are at-least defined on interval $[0,h] $. Define $ \delta(t):= \psi(t)-\phi(t) $, then $ \delta $ is the solution of the following IVP
	\begin{align}
	{}_\sigma^c{D}_{0}^{\alpha,\rho}\delta(t)&=f(t,\psi(t))-\tilde{f}(t,\phi(t)),\ \ \ t>0, \label{tildedeltadiffeq}  \\
	{}_{\sigma}D_{}^{k,\rho}\delta(0)&=0,     \ \ k \in \left\{0,1,2,...,m-1 \right\}.  \label{tildedeltainicon} 
	\end{align}
	The IVP $ \eqref{tildedeltadiffeq}-\eqref{tildedeltainicon} $ is equivalent to Volterra equation
	$$  \delta(t)=\frac{\rho}{\Gamma(\alpha)}\int_0^t\frac{
		e^{-\sigma(t^\rho-s^\rho)}}{(t^\rho-s^\rho)^{1-\alpha}} s^{\rho-1} \Big(f(s,\psi(s))-\tilde{f}(s,\phi(s))\Big) ds.	     $$
	Taking absolute of above equation and using Lipschitz condition on $ f $, we get
	\begin{align*}
	|\delta(t)|&=\Big| \frac{\rho}{\Gamma(\alpha)}\int_0^t\frac{
		e^{-\sigma(t^\rho-s^\rho)}}{(t^\rho-s^\rho)^{1-\alpha}} s^{\rho-1} \Big[\Big(f(s,\psi(s))-{f}(s,\phi(s))\Big) + \Big(f(s,\phi(s))-\tilde{f}(s,\phi(s))\Big) \Big] ds\Big| \\& \leq \frac{\rho}{\Gamma(\alpha)} \Big\{  \int_0^t\frac{
		e^{-\sigma(t^\rho-s^\rho)}}{(t^\rho-s^\rho)^{1-\alpha}} s^{\rho-1} \Big|f(s,\psi(s))-{f}(s,\phi(s))\Big|   ds + \int_0^t\frac{
		e^{-\sigma(t^\rho-s^\rho)}}{(t^\rho-s^\rho)^{1-\alpha}} s^{\rho-1} \Big| f(s,\phi(s))-\tilde{f}(s,\phi(s))\Big| ds \Big\}  \\& \leq \frac{\rho L}{\Gamma(\alpha)}   \int_0^t\frac{
		e^{-\sigma(t^\rho-s^\rho)}}{(t^\rho-s^\rho)^{1-\alpha}} s^{\rho-1} \Big|\delta(s)\Big|   ds + \frac{\rho }{\Gamma(\alpha)} \int_0^t\frac{
		s^{\rho-1}}{(t^\rho-s^\rho)^{1-\alpha}} \max_{(s,\phi(s)) \in H}  \Big| f(s,\phi(s))-\tilde{f}(s,\phi(s))\Big| ds   \\& \leq \frac{\rho L}{\Gamma(\alpha)}   \int_0^t\frac{
		e^{-\sigma(t^\rho-s^\rho)}}{(t^\rho-s^\rho)^{1-\alpha}} s^{\rho-1} \Big|\delta(s)\Big|   ds + \frac{\epsilon }{\Gamma(\alpha+1)}t^{\rho \alpha} \leq   \frac{\epsilon h^{\rho \alpha} }{\Gamma(\alpha+1)}+\frac{\rho L}{\Gamma(\alpha)}   \int_0^t\frac{
		e^{-\sigma(t^\rho-s^\rho)}}{(t^\rho-s^\rho)^{1-\alpha}} s^{\rho-1} \Big|\delta(s)\Big|   ds
	\end{align*}
	Taking $ p(t)=|\delta(t)| $, $ q(t)= \frac{\epsilon h^{\rho \alpha} }{\Gamma(\alpha+1)} $ and $ g(t)=\frac{\rho^{\alpha} L}{\Gamma(\alpha)} $, and using Theorem $ \ref{gronwall} $, we find
	$$
	|\delta(t)| \leq \frac{\epsilon h^{\rho \alpha} }{\Gamma(\alpha+1)} E_{\alpha} ( L t^{\rho \alpha} ) \leq \frac{\epsilon h^{\rho \alpha} }{\Gamma(\alpha+1)}	 E_{\alpha} ( L h^{\rho \alpha} ) = \mathcal{O} (\epsilon).
	$$
	This completes the proof.
\end{proof}

Finally, we explore the consequences of perturbing the order of the fractional differential equation.

\begin{theorem}
	Assume that $ \psi $ is the solution of the IVP $ \eqref{de}-\eqref{ic}  $ and $ \phi $ is the solution of the following IVP
	\begin{align}
	{}_\sigma^c{D}_{0}^{\tilde{\alpha},\rho}\phi(t)&=f(t,\phi(t)),\ \ \ t>0, \label{alphatildede}  \\
	{}_{\sigma}D_{}^{k,\rho}\phi(0)&=b_k,     \ \ k \in \left\{0,1,2,...,\tilde{m}-1 \right\}, \label{alphatildeic}
	\end{align}
	where $ \tilde{\alpha} > \alpha$ and $ \tilde{m}:=  \lceil \tilde{\alpha} \rceil $. Let $ \epsilon := \tilde{\alpha}-\alpha $ and
	
	\[
	\tilde{\epsilon} :=
	\begin{cases}
	0 & \text{if $\tilde{m}=m,$} \\
	\max \Big\{|b_k|: m \leq k \leq  \tilde{m}-1 \Big\} & \text{otherwise.}
	\end{cases}
	\]
	If $ \epsilon $ and $ \tilde{\epsilon} $ are sufficiently small, then $ \exists$ some constant $ h>0 $ such that $ \psi $ and $ \phi $ are defined on $ [0,h] $, and 
	$$
	\sup_{0 \leq t \leq h} |\psi(t)-\phi(t)|=\mathcal{O} (\epsilon) + \mathcal{O}(\tilde{\epsilon}).
	$$
\end{theorem}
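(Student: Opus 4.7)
The proof will follow the template of the two previous perturbation theorems, reducing everything to a Gronwall argument after quantifying how the order shift $\epsilon=\tilde{\alpha}-\alpha$ and the extra initial data affect the Volterra formulation. First I apply Theorem \ref{equivalence} to each IVP to obtain
\begin{align*}
\psi(t) &= e^{-\sigma t^\rho}\sum_{k=0}^{m-1}\frac{b_k}{\Gamma(k+1)}t^{\rho k} + {}_\sigma I_0^{\alpha,\rho}f(t,\psi(t)),\\
\phi(t) &= e^{-\sigma t^\rho}\sum_{k=0}^{\tilde{m}-1}\frac{b_k}{\Gamma(k+1)}t^{\rho k} + {}_\sigma I_0^{\tilde{\alpha},\rho}f(t,\phi(t)),
\end{align*}
both valid on a common interval $[0,h]$ obtained by intersecting the two existence intervals produced by Theorem \ref{uniquesolutionexisits}.

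Setting $\delta(t):=\psi(t)-\phi(t)$, subtracting the two equations and inserting $\pm\,{}_\sigma I_0^{\alpha,\rho}f(t,\phi(t))$ yields a three-term bound $|\delta(t)|\leq T_1+T_2(t)+T_3(t)$, where $T_1:=\sum_{k=m}^{\tilde{m}-1}|b_k|h^{\rho k}/\Gamma(k+1)$ (vacuously $0$ when $\tilde{m}=m$), $T_2(t)$ is the Lipschitz contribution ${}_\sigma I_0^{\alpha,\rho}|f(s,\psi(s))-f(s,\phi(s))|$, and $T_3(t):=\bigl|({}_\sigma I_0^{\alpha,\rho}-{}_\sigma I_0^{\tilde{\alpha},\rho})f(\cdot,\phi(\cdot))(t)\bigr|$ is the order-mismatch term. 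Clearly $T_1\leq \tilde{m}\tilde{\epsilon}\sum_{k=m}^{\tilde{m}-1}h^{\rho k}/\Gamma(k+1)=\mathcal{O}(\tilde{\epsilon})$, and hypothesis $(H2)$ gives
$$T_2(t)\leq \frac{\rho L}{\Gamma(\alpha)}\int_0^t \frac{s^{\rho-1}e^{-\sigma(t^\rho-s^\rho)}}{(t^\rho-s^\rho)^{1-\alpha}}|\delta(s)|\,ds,$$
which is precisely the integral form to which Theorem \ref{gronwall} applies.

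The main technical hurdle is $T_3(t)=\mathcal{O}(\epsilon)$ uniformly on $[0,h]$. Using boundedness of $f$ from $(H1)$,
$$T_3(t)\leq \rho M \int_0^t \Big|\frac{(t^\rho-s^\rho)^{\alpha-1}}{\Gamma(\alpha)}-\frac{(t^\rho-s^\rho)^{\tilde{\alpha}-1}}{\Gamma(\tilde{\alpha})}\Big|\, s^{\rho-1}e^{-\sigma(t^\rho-s^\rho)}\,ds.$$
The plan is to split the kernel difference, with $u=t^\rho-s^\rho$, as $\tfrac{1}{\Gamma(\tilde{\alpha})}|u^{\alpha-1}-u^{\tilde{\alpha}-1}| + u^{\alpha-1}\bigl|1/\Gamma(\alpha)-1/\Gamma(\tilde{\alpha})\bigr|$. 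Smoothness of $1/\Gamma$ gives $|1/\Gamma(\alpha)-1/\Gamma(\tilde{\alpha})|=\mathcal{O}(\epsilon)$. For the power difference, the elementary bound $|u^{\alpha-1}-u^{\tilde{\alpha}-1}|=u^{\alpha-1}|1-e^{\epsilon \ln u}|\leq \epsilon\, u^{\alpha-1}|\ln u|\,e^{\epsilon|\ln u|}$ together with local integrability of $u^{\alpha-1}|\ln u|$ on $(0,h^\rho]$ ensures that each summand integrates against $s^{\rho-1}\,ds$ to produce a factor of $\epsilon$, so $T_3(t)=\mathcal{O}(\epsilon)$.

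Finally, taking $p(t):=|\delta(t)|$, the non-decreasing constant $q(t):=\mathcal{O}(\epsilon)+\mathcal{O}(\tilde{\epsilon})$ absorbing $T_1$ and the supremum of $T_3$, and $g(t):=\rho^\alpha L/\Gamma(\alpha)$, the accumulated inequality has precisely the form required by Theorem \ref{gronwall}. Its Mittag--Leffler conclusion then yields
$$\sup_{0\leq t\leq h}|\delta(t)|\leq [\mathcal{O}(\epsilon)+\mathcal{O}(\tilde{\epsilon})]\,E_\alpha(Lh^{\rho\alpha})=\mathcal{O}(\epsilon)+\mathcal{O}(\tilde{\epsilon}),$$
which is the claimed estimate. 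The delicate step throughout is the $T_3$ bound, where the weak kernel singularity at $s=t$ must be reconciled with linear dependence on $\epsilon$; this is the generalized-substantial analogue of the classical Diethelm-style order-perturbation estimate for Caputo fractional differential equations.
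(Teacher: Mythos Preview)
Your argument is correct and mirrors the paper's proof: the same Volterra reformulation via Theorem \ref{equivalence}, the same three-term split into extra initial data, Lipschitz integral, and kernel-mismatch term, followed by Theorem \ref{gronwall}. The only deviation is in bounding $T_3$: the paper locates the unique sign change of $v\mapsto v^{\alpha-1}/\Gamma(\alpha)-v^{\tilde{\alpha}-1}/\Gamma(\tilde{\alpha})$ at $v_0=(\Gamma(\tilde{\alpha})/\Gamma(\alpha))^{1/\epsilon}$, removes the absolute value, and integrates explicitly to get $\mathcal{O}(\epsilon)$, whereas your triangle-inequality split using smoothness of $1/\Gamma$ together with the logarithmic bound $|u^{\alpha-1}-u^{\tilde{\alpha}-1}|\leq \epsilon\,u^{\alpha-1}|\ln u|\,e^{\epsilon|\ln u|}$ reaches the same conclusion by a slightly different (and arguably more transparent) calculus estimate.
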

\begin{proof}
	
	Let $ \psi $ and $ \phi $ be defined on $ [0,h_1] $ and $ [0,h_2] $, respectively. Take $ h=\min \left\{h_1,h_2\right\} $, then both the functions $ \psi $ and $ \phi $, are at-least defined on interval $[0,h] $. Define $ \delta(t):= \psi(t)-\phi(t) $, then using Theorem \ref{equivalence}
	\begin{align*}
	\delta(t)&=-e^{-\sigma t^\rho}  \sum_{k=m}^{\tilde{m}-1}\frac{b_k  }{\Gamma(k+1)}t^{\rho k}+\frac{\rho}{\Gamma(\alpha)}\int_0^t\frac{e^{-\sigma(t^\rho-s^\rho)}}{(t^\rho-s^\rho)^{1-\alpha}} s^{\rho-1} f(s,\psi(s)) ds\\
	&\quad -\frac{\rho}{\Gamma(\tilde{\alpha})}\int_0^t\frac{e^{-\sigma(t^\rho-s^\rho)}}{(t^\rho-s^\rho)^{1-\tilde{\alpha}}} s^{\rho-1} f(s,\phi(s)) ds \\&=-e^{-\sigma t^\rho}  \sum_{k=m}^{\tilde{m}-1}\frac{b_k  }{\Gamma(k+1)}t^{\rho k}+\frac{\rho}{\Gamma(\alpha)}\int_0^t\frac{e^{-\sigma(t^\rho-s^\rho)}}{(t^\rho-s^\rho)^{1-\alpha}} s^{\rho-1} \Big(f(s,\psi(s))-f(s,\phi(s))\Big) ds\\
	&\quad + \int_0^t    \Big( \frac{\rho(t^\rho-s^\rho)^{\alpha-1}}{\Gamma(\alpha)}  -\frac{\rho(t^\rho-s^\rho)^{\tilde{\alpha}-1}}{\Gamma(\tilde{\alpha})}  \Big) e^{-\sigma(t^\rho-s^\rho)} s^{\rho-1} f(s,\phi(s)) ds. 
	\end{align*}
	Taking absolute of above equation and using Lipschitz condition on $ f $, we get
	\begin{align*}
	|\delta(t)|& \leq \sum_{k=m}^{\tilde{m}-1}\frac{h^{\rho k} }{\Gamma(k+1)} \Big| b_k \Big| +\frac{\rho L}{\Gamma(\alpha)}\int_0^t\frac{e^{-\sigma(t^\rho-s^\rho)}}{(t^\rho-s^\rho)^{1-\alpha}} s^{\rho-1} |\delta(s)| ds\\
	&\quad +   \max_{(x,y) \in H} \Big|f(x,y)\Big| \int_0^t    \Big| \frac{\rho(t^\rho-s^\rho)^{\alpha-1}}{\Gamma(\alpha)}  -\frac{\rho(t^\rho-s^\rho)^{\tilde{\alpha}-1}}{\Gamma(\tilde{\alpha})}  \Big|  s^{\rho-1} ds \\& \leq  \mathcal{O} (\tilde{\epsilon}) +\frac{\rho L}{\Gamma(\alpha)}\int_0^t\frac{e^{-\sigma(t^\rho-s^\rho)}}{(t^\rho-s^\rho)^{1-\alpha}} s^{\rho-1} |\delta(s)| ds\\
	&\quad +   M \int_0^h    \Big| \frac{(v)^{\alpha-1}}{\Gamma(\alpha)}  -\frac{(v)^{\tilde{\alpha}-1}}{\Gamma(\tilde{\alpha})}  \Big|  dv. 
	\end{align*}
	It can be seen that the zero of above integrand is $ v_0=\Big( \frac{\Gamma(\tilde{\alpha})}{\Gamma({\alpha})} \Big)^\frac{1}{{\tilde{\alpha}-\alpha}} $. If $ h \leq  v_0 $, then absolute value sign can be taken outside the integral. In other case, the interval of integration must be separated at $ v_0 $, and each integral can be evaluated easily. Thus in any case, we find that the integral is bounded by $ \mathcal{O} (\tilde{\alpha}-\alpha)=\mathcal{O} (\epsilon) $. Thus, we have
	$$
	|\delta(t)| \leq  \mathcal{O} (\tilde{\epsilon}) + \mathcal{O} (\epsilon) +\frac{\rho L}{\Gamma(\alpha)}\int_0^t\frac{e^{-\sigma(t^\rho-s^\rho)}}{(t^\rho-s^\rho)^{1-\alpha}} s^{\rho-1} |\delta(s)| ds
	$$
	and using Theorem \ref{gronwall}, the desired result can be obtained.
\end{proof}


\end{document}